\pgfplotsset{compat=1.15}
\newcommand{\kk}{K}
\newcommand{\II}{\mathcal{I}}
\numberwithin{equation}{section}
\theoremstyle{plain}
\newtheorem{theorem}{Theorem}[section]
\newtheorem{corollary}[theorem]{Corollary}
\newtheorem{lemma}[theorem]{Lemma}
\theoremstyle{definition}
\newtheorem{definition}[theorem]{Definition}
\newtheorem{remark}[theorem]{Remark}
\newtheorem{example}[theorem]{Example}
\newtheorem{question}[theorem]{Question}
\newtheorem{introtheorem}{Theorem}[section]
\DeclareMathOperator{\rank}{rank}
\DeclareMathOperator{\Span}{span}
\newcommand{\exqed}{\hfill$\triangle$}
\title[Terracini matroids]{Terracini matroids: \\
algebraic matroids of secants and embedded joins}
\author{Fatemeh Mohammadi}
\address{Departments of Computer Science and Mathematics, KU Leuven}
\email{fatemeh.mohammadi@kuleuven.be}
\author{Jessica Sidman}
\address{Department of Mathematics, Amherst College}
\email{jsidman@amherst.edu}
\author{Louis Theran}
\address{School of Mathematics and Statistics, University of St Andrews}
\email{lst6@st-andrews.ac.uk}
\date{}
\begin{document}
\begin{abstract}
  Applications of algebraic geometry have sparked much recent work on
  algebraic matroids. An algebraic matroid encodes algebraic
  dependencies among coordinate functions on a variety.
  We study the behavior of algebraic matroids under joins and secants
  of varieties.
  Motivated by Terracini's lemma, we introduce the notion of a
  Terracini union of matroids, which
  captures when the algebraic matroid of a join coincides with the
  matroid union of the algebraic
  matroids of its summands. We illustrate applications of our results
  with a discussion of the implications for toric surfaces and threefolds.
\end{abstract}

\maketitle
\renewcommand{\thefootnote}{}
\footnotetext{\noindent{\!\!\!\!\!\!\!\small{Keywords:}} algebraic
  matroids, join of varieties, secant varieties, Terracini lemma.\\
  \noindent{\small{2020 Mathematics Subject Classification:}} 14N05,
  05B35, 14N07, 14Q15.
}
\renewcommand{\thefootnote}{\arabic{footnote}}
\section{Introduction}

In many applications of algebraic geometry, the coordinates of the  ambient space of the varieties studied have a meaningful interpretation, and  whether a set of coordinates is independent has important consequences.  The independence of coordinates on a variety $X$ is captured by the structure of an algebraic matroid $M(X)$.  Interestingly, in two applications, algebraic statistics and rigidity theory, secant varieties (and more generally, the embedded join of a collection of varieties) are of great interest (see \cite{allman2009identifiability, garcia2005algebraic, cruickshank2023identifiability}).

Recently, algebraic matroids were studied in work of Cruikshank et al.\ \cite{cruickshank2023identifiability} who proposed the notion of ``$g$-rigidity,'' which
extends ideas from classical rigidity theory to certain unirational varieties.  Inspired by their work, we seek to understand how secant and join constructions correspond to combinatorial operations on their matroids and ask a more fundamental question:

\begin{question}
  What is the relationship between the algebraic matroids of varieties $X_1, \ldots, X_s$ and the algebraic matroid of their join $X_1 + \cdots + X_s$?
\end{question}

We quickly sketch the essential background so we can give intuition for the main results. An irreducible variety $X \subseteq \mathbb{A}^{n}$, with the vanishing ideal
$\II(X) \subseteq \kk[z_1, \ldots, z_n]$, determines an algebraic
matroid $M(X)$ on the ground set $\{z_1, \ldots, z_n\}$ by the rule
that a set of variables $E$ is dependent  if and only if $\II(X)$
contains a non-zero polynomial with support contained in $E$.
Alternatively, the independent sets correspond to coordinate
subspaces onto which $X$ projects dominantly.
This perspective links combinatorics and algebraic geometry: the rank of $M(X)$
equals $\dim X$, a circuit $C$ in $M(X)$ corresponds to 
a polynomial in 
$\II(X)$ that cuts out the vanishing ideal of the projection of $X$ to the 
coordinates associated with $C$ and so on.

The following example illustrates what one might expect for a general variety.

\begin{example}\label{ex: scroll}
    Let $P$ be the lattice polygon depicted in Figure~\ref{fig: scroll}, and $X_P \subseteq \mathbb{P}^8$ denote the corresponding projective toric variety.  
    \begin{figure}[H]
            \begin{tikzpicture}[style = thick, scale = .6]
\tikzstyle{dot}=[circle,fill=black,inner sep=2 pt];
\begin{scope}[shift = {(0,0)}]
\node  [dot,label = below:{(0,0)}] (0) at (0,0) {};
\node  [dot] (1) at (1,0) {};
\node  [dot] (2) at (2,0) {};
\node  [dot] (3) at (3,0) {};
\node  [dot, label = below:{(4,0)}] (4) at (4,0) {};
\node  [dot] (5) at (0,1) {};
\node  [dot] (6) at (1,1) {};
\node  [dot] (7) at (2,1) {};
\node  [dot] (8) at (3,1) {};

\draw (0) -- (1) --(2) --(3)--(4) -- (8) -- (7)--(6)--(5)--(0);
\node at (-1.5,1) {$P = $};
\end{scope}
\begin{scope}[shift = {(10,0)}]
\node  [dot,label = below:{$z_0$}] (0) at (0,0) {};
\node  [dot,label = below:{$z_1$}] (1) at (1,0) {};
\node  [dot,label = below:{$z_2$}] (2) at (2,0) {};
\node  [dot,label = below:{$z_3$}] (3) at (3,0) {};
\node  [dot, label = below:{$z_4$}] (4) at (4,0) {};
\node  [dot,label = above:{$z_5$}] (5) at (0,1) {};
\node  [dot,label = above:{$z_6$}] (6) at (1,1) {};
\node  [dot,label = above:{$z_7$}] (7) at (2,1) {};
\node  [dot,label = above:{$z_8$}] (8) at (3,1) {};

\draw (0) -- (1) --(2) --(3)--(4) -- (8) -- (7)--(6)--(5)--(0);
\end{scope}
        \end{tikzpicture}
        \caption{The lattice polygon $P$ and the corresponding coordinates.}
        \label{fig: scroll}
    \end{figure}
    
    This is the rational normal scroll joining rational normal curves of degrees 3 and 4.  The ideal $\II(X_P)$ is generated by the $2 \times 2$ minors of the matrix 
    \[A = \begin{bmatrix}
    z_0 & z_1 & z_2 & z_5 & z_6\\
    z_1 & z_2 & z_3 & z_6 & z_7\\
    z_2 & z_3 & z_4 & z_7 & z_8
\end{bmatrix},\] and the ideal of $X_P^{\{2\}}$, the secant variety of lines spanned by pairs of points of $X_P$, is generated by the $3 \times 3$ minors of $A$. 

Examining the bases of the algebraic matroids of the affine cones $\tilde{X}_P$ and $\tilde{X}_{P}^{\{2\}}$ will help us gain intuition for what is to come.  For example, both $\{z_0, z_1, z_5\}$ and $\{z_2, z_6, z_7\}$ are bases of $M(\tilde{X}_P),$ and their union is a basis of $M( \tilde{X}_{P}^{\{2\}})$. In fact, {\em every} basis of $M(\tilde{X}_{P}^{\{2\}})$ is the union of disjoint bases of $M(\tilde{X}_P).$ We also note that neither the algebraic matroid of $X_P$ nor the algebraic matroid of its secant variety are uniform matroids.  Indeed, not every 3-element subset of coordinates is a basis of $M(X_P)$ as $z_1z_3-z_2^2$ (the $2 \times 2$ minor in the lower left corner) is in $\II(X_P)$, so $\{z_1, z_2, z_3\}$ is dependent.  Moreover, the determinant of the first three columns is in $\II(M(\tilde{X}_P)),$ so any 6-element set containing the 5 variables in the support of this determinant is also dependent.
\exqed
\end{example}

Example~\ref{ex: scroll} suggests that the algebraic matroid of the $s$th secant variety of a variety $X$ 
should have bases that are the disjoint union of $s$ bases of $M(X).$ While this is often true in 
sufficiently general coordinates, it cannot be true if $\dim \tilde{X}^{\{s\}} < s\dim \tilde{X}.$  In fact, 
Theorem~\ref{introtheorem: union} shows that this is the general obstruction.  
Theorem~\ref{introtheorem: subunion} shows there is a general relationship between the algebraic 
matroid of a join and the algebraic matroids of its summands.

To state our main results, we require some additional language from
combinatorics. Recall that if $M_1,\ldots,M_s$ are matroids on ground
sets $E_i$,
their matroid union $M_1 \vee \cdots \vee M_s$ is the matroid on
$E = E_1 \cup \cdots \cup E_s$ whose independent sets are
\[
  \{ I \subseteq E : I = I_1 \cup \cdots \cup I_s,\ \text{each $I_j$
  independent in $M_j$} \}.
\]

\medskip

Our first theorem establishes that the matroid union provides a natural
upper bound, in the weak order on matroids, for the algebraic matroid of a
join of irreducible affine cones.

\begin{introtheorem}[Sub-union Theorem]\label{introtheorem: subunion}
  Let $\kk$ be an algebraically closed field of characteristic zero. 
  If $X_1, \ldots, X_s\subseteq \mathbb{A}_\kk^n$ are irreducible
  affine cones with join $X$,  then
  \[
    M(X) \preceq M(X_1) \vee \cdots \vee M(X_s).
  \]
\end{introtheorem}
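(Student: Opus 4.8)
The plan is to pass from algebra to linear algebra at a generic point, apply Terracini's lemma, and then prove a purely combinatorial statement about matroids of subspaces. Unwinding the weak order, what we must show is that every set of variables $E \subseteq \{z_1,\dots,z_N\}$ that is independent in $M(X)$ is also independent in the union $M(X_1) \vee \cdots \vee M(X_s)$; by the definition of matroid union this means exhibiting a covering $E = I_1 \cup \cdots \cup I_s$ with each $I_j$ independent in $M(X_j)$.

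\textbf{Step 1: Linearization via generic tangent spaces.} In characteristic zero the algebraic matroid of an irreducible cone $Y$ is computed linearly: for a generic smooth point $q \in Y$, a set $E$ is independent in $M(Y)$ if and only if the coordinate projection $\pi_E \colon \kk^N \to \kk^E$ restricts to a surjection on the (affine) tangent space $T_q Y$, since the differentials $dz_e$ separate transcendence exactly when they are linearly independent on $T_q Y$. Thus $M(Y)$ equals the linear matroid $M(T_q Y)$ attached to the subspace $T_q Y \subseteq \kk^N$. I will apply this to $X$ and to each $X_j$ simultaneously. The join map $\mu \colon X_1 \times \cdots \times X_s \to X$, $(q_1,\dots,q_s) \mapsto q_1 + \cdots + q_s$, is dominant, so a generic tuple $(q_1,\dots,q_s)$ has each $q_j$ generic on $X_j$ and image $p = \mu(q_1,\dots,q_s)$ generic on $X$. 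For such $p$ we have $M(X) = M(T_p X)$ and $M(X_j) = M(T_{q_j} X_j)$ for all $j$.

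\textbf{Step 2: Terracini's lemma and the linear-algebra core.} Terracini's lemma (valid in characteristic zero) gives, at this generic $p$, the identification of tangent spaces as a subspace sum,
\[
  T_p X = T_{q_1} X_1 + \cdots + T_{q_s} X_s \subseteq \kk^N .
\]
Writing $V = T_p X$ and $V_j = T_{q_j} X_j$, the theorem reduces to the following linear statement: if $V = V_1 + \cdots + V_s$ then $M(V) \preceq M(V_1) \vee \cdots \vee M(V_s)$. Let $E$ be independent in $M(V)$, so $\pi_E(V) = \kk^E$. For $A \subseteq E$ set $r_j(A) = \dim \pi_A(V_j)$, the rank of $A$ in $M(V_j)$. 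Since $\pi_A(V) = \sum_j \pi_A(V_j)$ and $\pi_A(V) = \kk^A$ (projecting the surjection $\pi_E(V)=\kk^E$ further), subadditivity of dimension under a sum of subspaces yields
\[
  \sum_{j=1}^s r_j(A) \;\geq\; \dim \sum_{j=1}^s \pi_A(V_j) \;=\; \dim \pi_A(V) \;=\; |A|
  \qquad \text{for every } A \subseteq E .
\]
By the Nash-Williams rank formula for matroid union, the rank of $M(V_1)\vee\cdots\vee M(V_s)$ on $E$ equals $\min_{A \subseteq E}\bigl(|E \setminus A| + \sum_j r_j(A)\bigr)$, and the displayed inequality forces this minimum to equal $|E|$. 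Hence $E$ is independent in the union, completing the reduction.

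\textbf{Main obstacle.} The combinatorial core in Step 2 is routine once set up, so the delicate point is Step 1: one must choose a single point $p$ that is simultaneously generic enough for the tangent-space identification $M(X) = M(T_p X)$ and arises as $p = q_1 + \cdots + q_s$ with each $q_j$ generic for $M(X_j) = M(T_{q_j}X_j)$. Dominance of $\mu$ supplies such a $p$ on a dense open set, but verifying this compatibility carefully is where the characteristic-zero hypothesis is essential: it underwrites both the differential criterion for algebraic independence and the equality (rather than mere inclusion) of tangent spaces in Terracini's lemma.
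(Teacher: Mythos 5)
Your proof is correct, and it shares the paper's overall architecture---linearize the algebraic matroids at compatible generic points via the differential criterion, then apply Terracini's lemma to reduce everything to a statement about a subspace written as a sum of subspaces---but it resolves the combinatorial core by a genuinely different device. The paper first chooses complements $W_i \subseteq T^*_{x_i}X_i$ with $T^*_x X = W_1 \oplus \cdots \oplus W_s$ and then proves a bespoke partition lemma (Lemma~\ref{lem: partition}) by a Laplace-expansion argument with exterior products, producing an explicit partition $E = E_1 \cup \cdots \cup E_s$ with each $E_i$ independent in $M(X_i)$. You instead invoke the Nash--Williams/Edmonds rank formula for matroid union and verify $\sum_j r_j(A) \ge |A|$ for every $A \subseteq E$ via subadditivity of dimension applied to $\pi_A(V) = \sum_j \pi_A(V_j)$; this is sound, since independence of $E$ in the union is exactly the statement that the minimum in the rank formula equals $|E|$, which your inequality forces. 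Your route is shorter and avoids choosing direct-sum complements, at the cost of outsourcing the combinatorial content to the (nontrivial) matroid union theorem, whereas the paper's argument is self-contained and constructive, exhibiting the partition directly. The simultaneous-genericity issue you flag as the main obstacle is handled in the paper exactly as you sketch: one intersects the open loci furnished by the linearization theorem (Theorem~\ref{thm: ingleton}) for $X$ and for each $X_i$ with the Terracini locus, using dominance of the addition map to see the intersection is nonempty.
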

The hypothesis on $\kk$ is for technical reasons that will become clear in the
proof.  When equality holds in Theorem~\ref{introtheorem: subunion}
we say that $M(X)$ is a
{\em Terracini union}.  We will see that the Terracini union property
is determined by certain
projections.
For $E\subseteq [n]$, we denote by $\pi_E : \mathbb{A}_\kk^n\to
\mathbb{A}_\kk^E$
the linear projection to the coordinate subspace indexed by $E$.
The Sub-union theorem tells us that the algebraic matroid of a join is always  contained in the union of
the matroids of its factors.
The central problem is to determine when equality holds, i.e.~when $M(X)$ is a
\emph{Terracini union}.

\medskip
Our second main theorem characterizes when equality holds in the
Sub-union Theorem.

\begin{introtheorem}[Union Theorem]\label{introtheorem: union}
  Let $\kk$ be an algebraically closed field of characteristic zero. 
  If $X_1, \ldots, X_s\subseteq \mathbb{A}_\kk^n$ are irreducible affine cones
  with join $X$,
  then $M(X)$ is a Terracini union
  if and only if there does not exist a basis $B$ of $M(X_1) \vee
  \cdots \vee M(X_s)$
  such that the join
  \[
    \overline{\pi_B(X_1)} + \cdots + \overline{\pi_B(X_s)}
  \]
  is defective.
\end{introtheorem}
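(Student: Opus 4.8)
The plan is to reduce the matroid equality to a statement about bases, and then translate each basis into a defectivity condition for a projected join. First, I would record a purely combinatorial fact: for matroids $M \preceq N$ on a common ground set, one has $M = N$ if and only if every basis of $N$ is independent in $M$. The forward direction is immediate, and conversely, if every basis of $N$ is independent in $M$ then $\rank(M) \ge \rank(N)$, which together with $\rank(M) \le \rank(N)$ (from $M \preceq N$) forces equality of the ranks; any independent set of $N$ then extends to a basis of $N$, which is a basis of $M$, so the independent sets of $N$ are independent in $M$ as well. Applying this with $M = M(X)$ and $N = M(X_1) \vee \cdots \vee M(X_s)$, and invoking the Sub-union Theorem~\ref{introtheorem: subunion}, I reduce the claim to showing that $M(X)$ is a Terracini union if and only if every basis $B$ of the union is independent in $M(X)$.

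Next I would fix a basis $B$ of the union and characterize independence of $B$ in $M(X)$ geometrically. Since the algebraic matroid rank of a set equals the dimension of the corresponding coordinate projection, $B$ is independent in $M(X)$ precisely when $\dim \overline{\pi_B(X)} = |B|$. The key geometric input is that linear projections commute with joins: because $\pi_B$ is linear and $X$ is the join of the $X_i$, the sum morphism and $\pi_B$ intertwine, yielding
\[
  \overline{\pi_B(X)} = \overline{\pi_B(X_1)} + \cdots + \overline{\pi_B(X_s)}.
\]
Thus independence of $B$ in $M(X)$ is equivalent to the projected join $\overline{\pi_B(X_1)} + \cdots + \overline{\pi_B(X_s)}$ having dimension equal to $|B|$, the dimension of its ambient space $\mathbb{A}_\kk^B$.

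It then remains to see that, for a basis $B$ of the union, this full-dimensionality is exactly non-defectiveness. Writing $Y_i = \overline{\pi_B(X_i)}$, the expected dimension of the join $Y_1 + \cdots + Y_s$ inside $\mathbb{A}_\kk^B$ is $\min\{|B|,\ \sum_i \dim Y_i\}$. Since $\dim Y_i = \rank_{M(X_i)}(B)$ and the rank of a matroid union is bounded above by the sum of the ranks of its summands, I obtain $\sum_i \dim Y_i \ge \rank_{M(X_1) \vee \cdots \vee M(X_s)}(B) = |B|$, the last equality because $B$ is independent in the union. Hence the expected dimension equals $|B|$, so the projected join is defective exactly when $\dim(Y_1 + \cdots + Y_s) < |B|$, that is, exactly when $B$ is dependent in $M(X)$. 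Combining this with the previous two steps, $M(X)$ is a Terracini union if and only if no basis $B$ of the union produces a defective projected join.

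The main obstacle I anticipate is establishing the projection-join identity $\overline{\pi_B(X)} = \overline{\pi_B(X_1)} + \cdots + \overline{\pi_B(X_s)}$ cleanly, since it requires care with Zariski closures and with the image of the sum morphism $X_1 \times \cdots \times X_s \to X$. Once this compatibility is in hand, the matroid-union rank inequality guarantees that we are always in the regime where the expected dimension of the projected join is $|B|$, and the remaining dimension bookkeeping is routine.
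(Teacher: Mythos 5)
Your proposal is correct and follows essentially the same route as the paper's proof: both reduce the question to whether each basis $B$ of $M(X_1)\vee\cdots\vee M(X_s)$ is independent in $M(X)$, identify $\overline{\pi_B(X)}$ with the projected join $\overline{\pi_B(X_1)}+\cdots+\overline{\pi_B(X_s)}$, and use the partition of $B$ into sets independent in the $M(X_i)$ to show the expected dimension of that join is exactly $|B|$, so defectiveness is equivalent to dependence of $B$ in $M(X)$. The only differences are presentational — you phrase the argument in terms of dimensions of projections where the paper works with elimination ideals and the Closure Theorem, and your opening combinatorial lemma makes explicit a reduction the paper performs implicitly via $M(X)\prec M_\vee$.
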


Our work was motivated by questions originating in rigidity theory.
Example~\ref{exm: rigidity} below shows that the matroids that arise in
rigidity theory do not satisfy the Terracini union property.  Indeed,
the failure of the Terracini union property is a sign that of
interesting combinatorics! 

\begin{example}[Generic rigidity]\label{exm: rigidity}
  The Cayley--Menger variety
  ${\rm CM}_{d,n}$ studied in rigidity theory
  is
  the Zariski closure of the image of the map
  $\varphi:(\mathbb{C}^d)^n \to \mathbb{C}^{\binom{n}2}$ given by
  \[\varphi(p_1, \ldots, p_n) = (p_{i1}-p_{j1})^2+ \cdots +
  (p_{id}-p_{jd})^2\] whose restriction to $(\mathbb{R}^d)^n$ gives the
  squares of pairwise distances among $n$ points in $\mathbb{R}^d$
  \cite{B02}.
  From the form of $\varphi$, we see that
  \[
    {\rm CM}_{d,n} = {\rm CM}_{1,n} + \cdots + {\rm CM}_{1,n}
  \]
  is the $d$-fold join of  ${\rm CM}_{1,n}$ with itself.  In other words,
  ${\rm CM}_{d,n}$ is the $d$-th secant variety $({\rm
  CM}_{1,n})^{\{d\}}$ \cite{GHT}.  However, we note that $\dim {\rm CM}_{1,n} =
  n-1$ whereas $\dim {\rm CM}_{d,n} = dn-\binom{d+1}2< d(n-1)$ if $d>1.$

  In terms of combinatorics, $M({\rm CM}_{1,n})$ is well-known to be
  isomorphic to the graphic matroid of
  $K_n$. More generally, a basis of the rigidity matroid $M({\rm CM}_{d,n})$
  corresponds to a generically minimally rigid graph in dimension $d$
  with $n$ vertices.  Classifying these combinatorially is a
  notable open problem for $d\ge 3$ (see \cite{KJT22a,KJT22b} for
  recent progress on $d=3$). An
  exercise with the differential of $\varphi$ (e.g., \cite{WW83})
  also shows that
  any independent set in $M({\rm CM}_{d,n})$ can be partitioned
  into $d$ forests.  However, counting the number of 
  edges in a basis shows that these $d$ forests cannot all be 
  spanning trees as one would expect if
  $M({\rm CM}_{d,n}) = M({\rm CM}_{1,n}^{\{d\}})$ were a Terracini union. In fact, the bases of $M({\rm CM}_{d,n})$ 
  are precisely the connected graphs $G = (V,E)$ with $|E| = dn-\binom{d+1}2$ so that 
  $\overline{\pi_E({\rm CM}_{1,n})}$ is not $d$-defective.
\exqed
\end{example}

Our results are related to work of \cite{draisma2008tropical} and
\cite{lmr} who studied
secant defectiveness.
Laface, Massarenti, and Rischter~\cite{lmr} analyze
non-defectiveness of secant varieties of toric varieties via Terracini's
lemma and tangent space computations. These computations are formulated
as a linear optimization problem which is the essence of the tropical
perspective
introduced by Draisma~\cite{draisma2008tropical} to study secant
defectiveness more generally. While compatible with their approach,
our framework
uncovers additional combinatorial obstructions: the matroid union
property can fail even in cases where the tangent-space rank test of
\cite{lmr} succeeds. Example~\ref{ex:Laface} in \S\ref{subsec:Laface}
illustrates this distinction.

\subsection*{Outline.}
Section~\ref{sec:prelim} sets notation and reviews the basic notions of
joins, secant varieties, matroids, and algebraic matroids.
We prove our main results,  Theorems~\ref{introtheorem: subunion} 
and~\ref{introtheorem: union}, in Section~\ref{sec:secant}.
Section~\ref{sec:exam} develops examples from statistics, rigidity theory,
and toric geometry, illustrating that equality is subtle: projections
and parameter
choices can break the Terracini property. In particular, Example~\ref{ex:Laface}
compares our framework with tangent-space methods of~\cite{lmr},
highlighting the additional combinatorial obstructions that our
approach detects.

\section{Preliminaries}\label{sec:prelim}
In this section we briefly set notation and review relevant
definitions and intuition having to do with
joins, secant varieties, matroids, and algebraic matroids.  For the rest of this paper, 
let $\kk$ be an algebraically closed field of characteristic zero.

\subsection{Preliminaries from algebraic geometry}  
Let $S = \kk[z_1, \ldots, z_n]$ and give $\kk^n$ 
the basis corresponding to the coordinates $z_i$.  
Let $X\subseteq \kk^n$ be an irreducible variety with the 
ideal $\II = \II(X) = \langle f_1, \ldots, f_m\rangle\subseteq S$.  
We review a package of results 
analogous to differential geometric ones downstream from Sard's theorem using 
\cite{vakil} as our main reference.  

\subsubsection{Cotangent spaces with a canonical frame}
The first thing we need is to set up some machinery 
involving cotangent spaces of embedded varieties with a 
fixed coordinate system.  This will provide a bridge 
between intrinsic algebro-geometric results, and matroid theory, 
which depends on the coordinate system.

Let $R = S/\II$.  Denote by $\Omega_{S/\kk}|_X$ the $R$-module 
$\Omega_{S/\kk}\otimes_S R$, and by ${\rm d}$ the universal 
derivation.  Concretely, $\Omega_{S/\kk}\otimes_S R$ is 
the free $R$-module generated by $\{{\rm d}z_i\}$ (see 
\cite[\S 21.2.2 and 21.2.3]{vakil}).  By \cite[Thm 21.2.12]{vakil},
at a smooth point $x\in X$ with maximal ideal $m$, we get the exact sequence of 
$\kk$-vector spaces
\[
    0\to \II/\II^2\otimes_{\mathcal{O}_{X,x}} \kk \xrightarrow{{\rm d}}
    \Omega_{S/\kk}|_X\otimes_{\mathcal{O}_{X,x}}\kk\to 
    \Omega_{R/\kk}\otimes_{\mathcal{O}_{X,x}} \kk\to 0,
\]
because the residue field is $\kk$.
Left-exactness comes from the smoothness of $x$ (see \cite[Thm 21.2.31 and Cor 21.2.33]{vakil}).  
Using \cite[Exercise 21.2.F]{vakil}, the previous exact sequence is 
\[
    0\to N^*_x X\to (\kk^n)^* \to T^*_x X\to 0,
\]
where we have identified $T^*_x \kk^n\cong (\kk^n)^*$ using the 
fixed global coordinate frame 
$\{{\rm d}z_i\}$, which is independent of $x$ due to triviality 
of the cotangent bundle of $\kk^n$.  This allows us to glue together conormal 
sequences for different $X\subseteq \kk^n$ along the middle term.  

We immediately get an embedding
$N^*_x X\hookrightarrow (\kk^n)^*$ that is 
canonical with respect to the fixed coordinates $z_i$.  If we 
denote by $\kk^E$ the coordinate subspace spanned by 
$\{z_i : i\in E\}$, we see that, for any $x\in \kk^E$, 
$T^*_x \kk^E\cong (\kk^E)^*$ with the term on the 
right equal to the linear span of $\{{\rm d}z_i : i\in E\}$.

\subsubsection{Sard's theorem}
The second ingredient we need relates to smoothness.
A smooth point $y\in Y$ is a regular value of a map $f : X\to Y$ if, 
for any $x\in f^{-1}(y)$, the map $df_x: T_xM \to T_yM$ is surjective.  We will need the corresponding characterization for cotangent spaces, that the pullback of the differential $({\rm d}f)^*_y : T^*_y Y\to T^*_x X$ 
is injective. The following is an algebraic variant of Sard's theorem.
\begin{lemma}[{\cite[Thm 21.6.6]{vakil}}]\label{lem: sard}
Let $\kk$ be algebraically closed field of characteristic zero and $f : X\to Y$, a 
dominant map of $\kk$-varieties.  Then the set of regular values of $f$ 
contains an open dense subset of $Y$.
\end{lemma}
We remark that, as discussed in \cite[Section 21.6]{vakil}, this kind of 
``generic smoothness in the target'' requires characteristic zero, 
whereas a weaker ``generic smoothness on the source'' needs only 
an algebraically closed field.  
Combining the ingredients in this section, 
we get a quick proof of the conormal version of Terracini's Lemma in 
Appendix~\ref{sec: terracini}.

\subsection{Secants and joins}
Although our motivation is from questions about projective varieties,
statements will be cleaner if we instead work with affine cones, which
are affine varieties defined by homogeneous prime ideals.  In this
section we provide notation and definitions in this context.

We define $X \subseteq \mathbb{A}^n$ to be an {\em affine cone} if it
is an algebraic variety defined by a homogeneous ideal
$\II(X) \subseteq S = \kk[z_1, \ldots, z_n]$.  Note that if
$X$ is an affine cone, then it has the property that whenever 
$x \in X$ and $r \in \kk$, it follows that $rx \in X.$
If $X_1, \ldots, X_s$ are  irreducible affine cones, we define their
{\em embedded join}
\[
X_1+\cdots+X_s = \overline{
  \{x_1+ \cdots + x_s \mid x_i\in X_i\}
}
\]
to be  the Zariski closure of the union of the affine subspaces
spanned by $s$ points, one from each cone $X_i.$  The
{\em expected dimension} of $X_1+\cdots +X_s$ is
$\min\{\sum_{i =1}^s \dim X_i,N\}$.
If the dimension of $X_1+\cdots +X_s$ is less than the expected dimension
we say that the join is \emph{defective}.

In the special case where $X = X_i$ for all $i$, we write $X^{\{s\}}
= X_1 + \cdots +X_s$ and call this the $s$th secant variety of $X.$  We
say that $X$ is $s$-defective if its dimension is less than
$\min\{ns, N\}$, where $n = \dim X.$

Since we will use linear algebraic methods, we record the following version of Terracini's Lemma for cotangent 
spaces.
\begin{lemma}\label{lem: product cotangent space}
Let $X_1, \ldots, X_s$ be irreducible varieties in $\mathbb{A}_\kk^n$.  Then, over an open dense set of 
$x = (x_1, \ldots, x_s)$ in $X = X_1 \times \cdots \times X_s$, 
\[
    T^*_x X \cong T^*_{x_1} X_1 \oplus \cdots \oplus T^*_{x_s} X_s,
\]
where the isomorphism is canonical from pulling back projections onto 
the factors.
\end{lemma}
\begin{proof}
Let $p_i:X_1\times \cdots \times X_s\to X_i$ be the projection 
to the $i$th factor. Because 
$p_i$ is dominant for each $i$, the set of regular values 
contains an open dense subset $U_i$.  The set $p_1^{-1}(U_1)\cap \cdots \cap 
p_s^{-1}(U_s)$ is open and dense in $X$, since it is a finite intersection 
of open dense sets.  Hence, there is an open dense set $U\subseteq X$ of 
points $x = (x_1, \ldots,x_s)$ such that each $x_i$ is a regular value of 
$p_i$.

Now fix a point $y = (y_1, \ldots, y_s)\in X$ and define 
$\iota_i : X_i \to X$ by $\iota_i(x_i) = (y_1, \ldots, x_i, \ldots, y_s)$
(replace $y_i$ by $x_i$ in $y$).  Hence $\iota_i$ is a right 
inverse of $p_i$ for each $i$ and, for $j\neq i$, 
$p_j\circ \iota_i$ is a constant map that sends every point 
to $y_j$.  This implies that 
$({\rm d}\iota_i)^*_{\iota_i(x_i)}({\rm d}p_i)^*_{x_i}$ 
is the identity on $T^*_{x_i} X_i$ for all $x_i\in X$ 
and that, for $j\neq i$, $({\rm d}\iota_i)^*_{\iota(x_i)}({\rm d}p_j)^*_{x_j}$ 
is zero, for all $x_i\in X_i$ and $x_j\in X_j$.

With this setup, let $x\in U$ be given.
Suppose that $\varphi_i, \varrho_i\in T^*_{x_i} X_i$ are such that 
\[
    ({\rm d}p_1)^*_{x_1}(\varphi_1) + \cdots + ({\rm d}p_s)^*_{x_s}(\varphi_s) = 
    ({\rm d}p_1)^*_{x_1}(\varrho_1) + \cdots + ({\rm d}p_s)^*_{x_s}(\varrho_s).
\]
Applying $({\rm d}\iota_i)^*_{\iota_i(x_i)}$ to both sides  implies that $\varphi_i = \varrho_i$.  
This shows that the image of the 
linear map $T^*_{x_1}X_1 \times \cdots \times T^*_{x_s}X_s \to T^*_x X$
with components $({\rm d}p_i)^*_{x_i}$ is isomorphic to
\[
    \operatorname{im} ({\rm d}p_1)^*_{x_1} \oplus \cdots \oplus \operatorname{im} ({\rm d}p_s)^*_{x_s}.
\]
Because each $x_i$ is a regular value of $p_i$, this image is canonically 
isomorphic (the sections $\iota_i$ depend on $y$, but the
$({\rm d}p_i)^*_{x_i}$  do not) to
\[
    T^*_{x_1} X_1 \oplus \cdots \oplus T^*_{x_s} X_s,
\]
and then, considering dimension, we get the result.
\end{proof}

\subsection{Matroids}To state our results, we need to recall some
general material about matroids.
There is a  {\em weak order} $M\preceq M'$ on matroids with a common
ground set $E$, in which $M\preceq M'$ means that every dependent set
in $M'$ is also dependent in $M$.  We say that $M \prec M'$ if
$M\preceq M'$ and there is a set that
is dependent in $M$ but not in $M'$.  If $M_1, \ldots, M_s$ are
matroids on ground sets $E_1, \ldots, E_s$, the
{\em matroid union} $M_1\vee \cdots \vee M_s$ is the matroid $M$ on
ground set $E_1 \cup \cdots \cup E_s$ that has as its independent sets:
\[
\{ I : I =  I_1 \cup \cdots \cup I_s\},
\]
where each $I_i$ is independent in $M_i$. When the $M_i$
are all equal to a matroid $M$, we write the $s$-fold union as $sM$.

\subsection{Algebraic matroids}
The algebraic matroid of $X$, denoted $M(X),$ is the data of which
subsets of variables in
$S$ are related in the ideal $\II(X)$.
\begin{definition}\label{def: algebraic matroid}
Let $Z = \{z_1, \ldots, z_n\}$ and $X\subseteq \mathbb{A}^n$
be an irreducible variety so that $\II(X)$ is a prime ideal
in $S = \kk[Z]$.  Define the
{\em algebraic matroid} of $X$, denoted $M(X)$, to be the
matroid with ground set $Z$ where $E \subseteq
Z$ is independent if
$\II(X) \cap \kk[E] = \langle 0\rangle.$
\end{definition}
We will frequently use the following geometric interpretation.
\begin{lemma}\label{lem: independent iff dominant}
In the setup of Definition~\ref{def: algebraic matroid}, $E\subseteq Z$ is independent in 
$M(X)$ if and only if the linear projection $\pi_B : X\to \kk^E$ is dominant.
\end{lemma}
\begin{proof}
The Closure Theorem implies that the vanishing ideal of $\pi_E(X)$ is $\II(X) \cap \kk[E]$, so 
the projection is dominant if and only if $\II(X) \cap \kk[E] = \langle 0\rangle.$
\end{proof}
The rank of $M(X)$ is equal to the dimension of $X$.
This provides a natural notion of the ``expected rank'' for algebraic
matroids of embedded joins, namely that the expected rank of $M(X_1+\cdots +X_s)$ is the expected dimension of $X_1+\cdots+X_s.$
To gain further intuition about what algebraic matroids capture,
we examine how coordinate changes affect the matroid
$M(X^{\{s\}})$.
\begin{example}\label{ex: P3 veronese}
To illustrate how coordinate changes affect algebraic matroids, we examine the image of the quadratic
Veronese map $\nu_2:\mathbb{P}^3 \to \mathbb{P}^9$ in three different
coordinate systems.  If we let the coordinates of $\nu_2$ be given by
monomials, then its image is defined by an ideal in
$\mathbb{C}[z_{ij} : 1\leq i <j\leq 5]$ generated by the $2 \times 2$
minors of the generic symmetric matrix:
\[
  A_1 =
  \begin{pmatrix}
    z_{15} & z_{12} & z_{13} & z_{14}\\
    z_{12} & z_{25} & z_{23} & z_{24}\\
    z_{13} & z_{23} & z_{35} & z_{34}\\
    z_{14} & z_{24} & z_{34} & z_{45}
\end{pmatrix}.\]
A linear change of coordinates results in the matrix
\[A_2 =
  \begin{pmatrix}
    2z_{15}& z_{15}+z_{25}-z_{12}& z_{15}+z_{35}-z_{13}&
    z_{15}+z_{45}-z_{14}\\
    z_{15}+z_{25}-z_{12} & 2z_{25} & z_{25}+z_{35}-z_{23}&
    z_{25}+z_{45}-z_{24}\\
    z_{15}+z_{35}-z_{13}& z_{25}+z_{35}-z_{23}& 2z_{35} &
    z_{35}+z_{45}-z_{34}\\
    z_{15}+z_{45}-z_{14}& z_{25}+z_{45}-z_{24}& z_{35}+z_{45}-z_{34} & 2z_{45}
  \end{pmatrix},
\]
whose $2\times 2$ minors define an isomorphic variety.  Finally, let
$A_3 = f(A_1)$, where $f$ is a general linear change of coordinates.

With this notation we can describe the ideals of the quadratic Veronese surface and its
secant varieties in three different coordinate systems.
Define $\II_k(A_i)$ to be the ideal generated by the $k \times k$
minors of $A_i$,
and set $X_i = V(\II_2(A_i))$, $X_i^{\{2\}} = V(\II_3(A_i))$, and
$X_i^{\{3\}} = V(\II_4(A_i)).$
Each $X_i$ is defined by the $2\times 2$ minors of $A_i$,
$X_i^{\{2\}}$ is its
variety of 2-secant lines, and $X_i^{\{3\}}$ is its variety of
3-secant planes. Note that $X_1$ is a
toric variety, $X_2$ is the Cayley-Menger variety ${\rm CM}_{1,5}$, and
$X_3$ is isomorphic to
both of these with generic coordinates.

The algebraic matroid of a variety captures dependencies among
coordinates, and since the polynomial relations on these varieties
are different, we expect them to have different algebraic matroids.
Thus, although
the three varieties are isomorphic, their corresponding algebraic
matroids are not (until we reach $X_i^{\{3\}}$) as shown in the
following table.
\begin{table}[h!]
  \centering
  \begin{tabular}{|c|c|c|c|}
    \hline
    $X$ &  \# bases $M(X)$ & \# bases $M(X^{\{2\}})$ & \# bases
    $M(X^{\{3\}})$\\
    \hline
    $X_1$ & 141 & 104 & 10 \\
    \hline
    $X_2$ & 125 & 100 & 10\\
    \hline
    $X_3$ & 210 & 120 & 10\\
    \hline
  \end{tabular}
  \caption{Counts of the bases of the algebraic matroid of the
    quadratic embedding of
    $\mathbb{P}^3$ and its secant varieties with three different
  coordinate systems.}
  \label{tab:my_label}
\end{table}
Note that $210 = \binom{10}{4}$ and $120 = \binom{10}{7},$ so the
last row of the table
indicates that the algebraic matroid of $X_3$ and its secant
varieties are uniform matroids.
We want to emphasize that $M(X_1)\neq M(X_3)$ and $M(X_2) \neq
M(X_3)$ showing that $X_1$ and
$X_2$ have special coordinates resulting in many fewer independent
sets than in
the generic case.  Additionally, in each case $\rank M(X_i) = 4,$ and
the rank of the matroid union $M(X_i) \vee M(X_i)$ is 8,
while the rank of $M(X_i^{\{2\}})$ is 7.
\exqed
\end{example}

The proof of Theorem~\ref{introtheorem: subunion} relies on linearizing
$M(X)$.  The following is a folklore corollary of \cite{ingleton}, 
but we give a quick proof for completeness.
\begin{theorem}\label{thm: ingleton}
Let $X \subseteq \mathbb{A}_{\kk}^n$ be an irreducible affine variety.
If $\kk$ is closed
and has characteristic zero, then there is an open, dense subset
$U\subseteq X$
such that $M(X)$ is isomorphic to the $\kk$-linear matroid
of the images ${\rm d} z_i$ of the coordinate functions $z_i$ in the Zariski
cotangent space at any point $x\in U$.
\end{theorem}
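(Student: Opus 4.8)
The plan is to factor the claimed isomorphism through the module of K\"ahler differentials $\Omega_{\kk(X)/\kk}$ and to exploit that characteristic zero makes the function field extension $\kk(X)/\kk$ separably generated. First I would reinterpret the algebraic matroid at the level of the function field. By Definition~\ref{def: algebraic matroid}, a set $E \subseteq Z$ is independent in $M(X)$ exactly when $\II(X) \cap \kk[E] = \langle 0 \rangle$, i.e.\ when there is no nonzero polynomial relation among the restricted coordinate functions $\{x_i : i \in E\}$; this says precisely that these functions are algebraically independent over $\kk$ as elements of $\kk(X)$. Thus $M(X)$ is the matroid of algebraic (in)dependence of the coordinate functions in $\kk(X)$, with the identification of ground sets being the identity on $Z$.

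The key input is the standard differential criterion for algebraic independence in a separably generated extension. Since $\kk$ has characteristic zero it is perfect, so $\kk(X)/\kk$ is separably generated and $\dim_{\kk(X)} \Omega_{\kk(X)/\kk} = \operatorname{trdeg}_\kk \kk(X) = \dim X$. Under these hypotheses, elements $f_1, \ldots, f_k \in \kk(X)$ are algebraically independent over $\kk$ if and only if their differentials ${\rm d}f_1, \ldots, {\rm d}f_k$ are $\kk(X)$-linearly independent in $\Omega_{\kk(X)/\kk}$. Applied to the coordinate functions, this identifies $M(X)$, ground set and all, with the $\kk(X)$-linear matroid of the differentials ${\rm d}x_i \in \Omega_{\kk(X)/\kk}$. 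I would quote this criterion from a standard reference rather than reprove it; its proof is exactly where separability is essential, and it is the main conceptual obstacle in the whole argument.

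It then remains to transport this linear matroid from the generic fibre $\Omega_{\kk(X)/\kk}$ to the cotangent space at an honest closed point. The finitely generated $\kk[X]$-module $\Omega_{\kk[X]/\kk}$ has generic stalk $\Omega_{\kk(X)/\kk}$, and at any closed point $x$ (where $\kappa(x) = \kk$ because $\kk$ is algebraically closed) its fibre $\Omega_{\kk[X]/\kk} \otimes_{\kk[X]} \kappa(x)$ is canonically the Zariski cotangent space $\mathfrak{m}_x/\mathfrak{m}_x^2$; under these identifications ${\rm d}x_i$ is sent to the image of $x_i$ in $\mathfrak{m}_x/\mathfrak{m}_x^2$. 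So every closed point yields a $\kk$-linear realization on $Z$ of some matroid, whose rank on each subset is at most the generic rank by semicontinuity, and I must show equality holds on a dense open set.

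The last step is a genericity argument via nonvanishing of minors. Represent the differentials ${\rm d}x_1, \ldots, {\rm d}x_N$ by (the transpose of) the Jacobian of a generating set of $\II(X)$, a matrix with entries in $\kk[X]$, so that the rank of the subfamily indexed by $E$ at a point $x$ is the rank of the corresponding submatrix evaluated at $x$. This rank equals its generic value on the open locus where some maximal minor is nonzero. For each basis $B$ of the generic linear matroid the associated maximal minor is a nonzero regular function on $X$, hence nonvanishing on a dense open subset; intersecting these finitely many dense opens with the smooth locus of $X$ produces a dense open $U \subseteq X$ on which, for every $E \subseteq Z$, the specialized rank equals the generic rank. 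Consequently, for each $x \in U$ the $\kk$-linear matroid of the ${\rm d}x_i$ in $\mathfrak{m}_x/\mathfrak{m}_x^2$ coincides with the generic linear matroid in $\Omega_{\kk(X)/\kk}$, which by the previous steps is $M(X)$; this gives the desired isomorphism and completes the proof.
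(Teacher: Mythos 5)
Your proposal is correct and follows essentially the same route as the paper: both identify $M(X)$ with the $\kk(X)$-linear matroid of the differentials ${\rm d}x_i$ in $\Omega_{\kk(X)/\kk}$, using characteristic zero (separability) for the differential criterion of algebraic independence, and then specialize to the Zariski cotangent space at a general rational point. If anything, your closing step---restricting to the dense open set where every generically nonvanishing maximal minor of the Jacobian stays nonzero, so that the entire matroid and not merely its rank specializes correctly---is spelled out more carefully than in the paper, which only records the linear isomorphism $m/m^2 \to \Omega_{\kk(X)/\kk}\otimes_{\kk(X)}\kk$.
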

\begin{proof}
Let $Z = \{z_1, \ldots, z_n\}$.  Let $B$ be a basis of $M(X)$.  By Lemma~\ref{lem: independent iff dominant}, 
$\pi_B$ is dominant, so there is an open dense subset $V_B\subseteq \kk^B$ of regular values 
by Lemma~\ref{lem: sard}. 
Set $U_B = \pi^{-1}_B(V_B)$; this is an open dense subset of $X$ by continuity and 
irreducibility of $X$.  For any $x\in U_B$, the differential pullback 
$({\rm d}\pi_B)^*_x : T^*_{\pi_B(x)} \kk^B \to T^*_x X$ 
is injective, and considering dimension, a linear isomorphism.  
By the identification of $T^*_{\pi_B(x)}$ with $(\kk^B)^*$, 
the images of $\{{\rm d} z_i : i\in B\}$ in 
$T^*_x X$ are linearly independent, and considering dimension, a basis. 
The set $U$ in the conclusion is obtained by taking the intersection of $U_B$ as $B$ 
varies over all bases of $M(X)$.
\end{proof}
See \cite{ rosen2025linearizing} for a more elementary proof and a
discussion of the
relationship with \cite{ingleton}.  If a variety is the image of
another variety, then the linear matroid in Theorem~\ref{thm:
ingleton} can also be obtained from the image of the differential at
a suitably general point.  We illustrate these ideas in the following example.

\begin{example}
Let $\varphi: \mathbb{C}^4 \to \mathbb{C}^4$ be given by
$\varphi(s,t,u,v) = (su,sv, tu, tv).$  The Zariski closure of the
image of $\varphi$ is the variety $X$ with $\II(X) =
\langle z_1z_4-z_2z_3 \rangle.$  The matroid $M(X)$ is the uniform
matroid of rank 2 on 4 elements.  From Theorem~\ref{thm: ingleton},
at a generic point of $X$, the algebraic matroid $M(X)$ is isomorphic
to the linear
matroid on the ${\rm d}z_i$ with unique circuit
\[
  z_4{\rm d}z_1+z_1{\rm d}z_4-z_2{\rm d}z_3-z_3{\rm d}z_2.
\]
Here it is enough to choose a point on $X$ where all coordinates are nonzero.

We can compute the same matroid from the differential of $\varphi$
again assuming the input is suitably generic.  For example, we have
\[ {\rm d} \varphi =
  \begin{bmatrix}
    u & 0 & s & 0\\
    v & 0 & 0 & s\\
    0 & u & t & 0\\
    0 & v & 0 & t
\end{bmatrix}.\]  At a suitably general point, say $p = (1,1,1,1),$
${\rm d}\varphi_p$ is a rank 2 matrix in which every pair of rows is
linearly independent, giving the uniform matroid of rank 2 on 4 elements..
However, at $q = (1,0,1,0),$ the last row of ${\rm d}\varphi_q$ is zero,
so the linear matroid on the rows of $d\varphi_q$ is not uniform.
\exqed
\end{example}

\subsection{A linear algebra lemma}
We need a standard result from combinatorial linear algebra.  It 
generalizes the Laplace expansions from \cite{WW83}, which have been 
used many times in rigidity theory, including in \cite{cruickshank2023identifiability}. %
 We begin by introducing some notation that we will use throughout the paper.  
Let $V$ be a vector space with basis $\{v_1, \ldots, v_n\}.$ If $E \subseteq[n],$ define 
$V_E = \Span \{v_i \mid i \in E\},$ and let $\pi_{E}:V \to V_E$ be the 
linear projection to $V_E$ with kernel $V_{\bar{E}}$.
\begin{lemma}\label{lem: partition}
Let $T :V \to  U_1 \oplus \cdots \oplus U_s$
be a linear isomorphism between $n$-dimensional
vector spaces and let $p_i:U_1\oplus \cdots \oplus U_s \to U_i$ be the $i$th projection. 
If $\{v_i\}$ is any basis of $V$,
there is a partition $\{E_1, \ldots, E_s\}$ of $[n]$ so that, for
each $i$, the linear maps
$p_i \circ T|_{V_{E_i}} : V_{E_i} \to U_i$ and $ \pi_{E_i} \circ T^{-1}|_{U_i} :U_i \to V_{E_i}$ are  isomorphisms. 
\end{lemma}
\begin{proof}
The general case follows by induction from the statement when $s=2$,
so to simplify notation, we look at $T:V \to U_1\oplus U_2$.  
Let $E_1 \subseteq [n]$ be a minimal set so that $p_1 \circ T|_{V_{E_1}}$ 
is surjective and let $E_2 = \overline{E}$.  By minimality, 
$p_1 \circ T|_{V_{E_1}}$ must be an isomorphism.  We claim that 
$p_2 \circ T|_{V_{E_2}}$ is injective.  Indeed, suppose that 
$v\in V_{E_2}$ such that ${\bf 0} = p_2 \circ T|_{V_{E_2}}(v) = p_2(T(v)).$ 
Then $T(v) \in \ker p_2 = U_1,$ and since $U_1 \cap U_2 = {\bf 0},$ it must 
be that $v = {\bf 0}.$  Since $|E_2| = \dim U_2$, if  
$p_2 \circ T_{V_{E_2}}$ is injective, it is an isomorphism.  
Since $\pi_{E_i} \circ T^{-1}|_{U_i} :U_i \to V_{E_i}$ is the 
inverse of $p_i \circ T|_{V_{E_i}} : V_{E_i} \to U_i$, the result follows.
\end{proof}

\section{Algebraic matroids and embedded joins}\label{sec:secant}
In this section, we prove our main theorems.  
\begin{theorem}[Sub-union Theorem]\label{thm: subunion}
Let $\kk$ be a field with $\kk = \overline{\kk}$ and
$\operatorname{char}\kk = 0$.
If $X_1, \ldots, X_s\subseteq \mathbb{A}_\kk^N$ are irreducible
affine cones,  then
\[
  M(X_1 + \cdots + X_s) \preceq M(X_1) \vee \cdots \vee M(X_s).
\]
\end{theorem}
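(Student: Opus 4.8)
The plan is to reduce the statement to a purely linear-algebraic fact about linear matroids of a sum of subspaces, using Theorem~\ref{thm: ingleton} to pass from algebraic matroids to linear matroids of tangent spaces and Terracini's lemma to identify the tangent space of the join. Write $X = X_1 + \cdots + X_s$. By Theorem~\ref{thm: ingleton}, for each factor there is an open dense $U_j \subseteq X_j$ on which $M(X_j)$ is the $\kk$-linear matroid of the coordinate differentials $\mathrm{d}z_i$ in the cotangent space; dually, a set $E$ is independent in $M(X_j)$ exactly when the coordinate projection restricts to a surjection $\pi_E\colon T_{x_j}X_j \twoheadrightarrow \kk^E$, i.e. $\pi_E(T_{x_j}X_j) = \kk^E$. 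The same description holds for $M(X)$ at a generic point of $X$.

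First I would fix a single compatible generic point. The addition map $\alpha\colon X_1 \times \cdots \times X_s \to X$, $(x_1,\dots,x_s)\mapsto x_1+\cdots+x_s$, is dominant, and the locus where every $x_j$ lies in $U_j$ is open and dense in the product; since $X$ is irreducible, the image of this locus is dense and constructible, hence meets the open dense set $U \subseteq X$ furnished by Theorem~\ref{thm: ingleton} for $X$ itself. Choosing $x = x_1 + \cdots + x_s$ in this intersection, Terracini's lemma gives
\[
  T_x X = T_{x_1}X_1 + \cdots + T_{x_s}X_s.
\]
Setting $V = T_x X$ and $V_j = T_{x_j}X_j$, we have reduced to the following: for subspaces $V_1,\dots,V_s \subseteq \kk^N$ with $V = V_1 + \cdots + V_s$, the linear matroid of $V$ (where $E$ is independent iff $\pi_E(V) = \kk^E$) satisfies $M(V) \preceq M(V_1) \vee \cdots \vee M(V_s)$.

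The linear step is then short. Let $E$ be independent in $M(V)$, so $\pi_E(V) = \kk^E$. By the Nash-Williams--Edmonds criterion, $E$ is independent in the matroid union if and only if $\sum_{j} r_j(F) \ge |F|$ for every $F \subseteq E$, where $r_j(F) = \dim \pi_F(V_j)$ is the rank of $F$ in $M(V_j)$. For such an $F$, projecting the identity $V = \sum_j V_j$ gives $\sum_j \pi_F(V_j) = \pi_F(V) = \kk^F$, since $F \subseteq E$ and $\pi_E(V) = \kk^E$ force $\pi_F(V) = \kk^F$. Hence
\[
  \sum_{j} \dim \pi_F(V_j) \;\ge\; \dim\Big(\textstyle\sum_{j}\pi_F(V_j)\Big) \;=\; |F|,
\]
which is exactly the required inequality; thus $E$ is independent in $M(X_1)\vee\cdots\vee M(X_s)$, proving the weak-order inequality.

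I expect the only real obstacle to be the genericity bookkeeping in the first two steps: one must guarantee a single point $x \in U$ that simultaneously decomposes as a sum of points $x_j \in U_j$ at which both Theorem~\ref{thm: ingleton} and Terracini's lemma are valid. This is where the hypotheses $\kk = \overline{\kk}$ and $\operatorname{char}\kk = 0$ enter, since they underlie Theorem~\ref{thm: ingleton} and the dominance arguments for $\alpha$; the linear-algebra core, by contrast, is characteristic-free and immediate.
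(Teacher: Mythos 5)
Your proof is correct, and the reduction to tangent spaces via Theorem~\ref{thm: ingleton} and Terracini's lemma is the same as the paper's; the difference lies entirely in the linear-algebraic core. The paper proves, via an exterior-algebra/Laplace-expansion argument (Lemma~\ref{lem: partition}), that a basis $E$ of $M(X)$ admits an explicit partition $E = E_1 \cup \cdots \cup E_s$ with each $E_i$ independent in $M(X_i)$, which matches the partition-based definition of matroid union head-on. You instead verify independence in the union through the Nash--Williams--Edmonds rank criterion, $\sum_j r_j(F) \ge |F|$ for all $F \subseteq E$, which follows in one line from $\pi_F(V) = \sum_j \pi_F(V_j)$ and subadditivity of dimension. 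Your route is shorter and avoids the appendix lemma entirely, at the cost of importing the (classical, but external to the paper) rank formula for matroid union; the paper's route is self-contained and produces the partition constructively, which is also the form in which the union is used later (e.g.\ in the proof of Theorem~\ref{thm: union equality}, where a partition of a basis $B$ into independent sets $B_i$ is needed). One small point of care that you already flag: the single generic point $x = x_1 + \cdots + x_s$ must lie simultaneously in the Terracini-generic locus and in the open sets where Theorem~\ref{thm: ingleton} applies to $X$ and to each $X_i$; your density-of-image argument for the addition map handles this, and it is the same bookkeeping the paper does with Sard's theorem.
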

\begin{proof}
Let $X = X_1+ \cdots + X_s$ and denote by $\sigma : X_1 \times \cdots \times X_s\to X$
the map that adds points.  For each $X_i$, by Theorem~\ref{thm: ingleton}, 
there is an open, dense subset $U_i\subseteq X_i$ such that, if $x_i\in U_i$, 
then $M(X_i)$ is isomorphic to the linear matroid of the coordinate differentials 
${\rm d}z_j$ in $T^*_{x_i} X_i$.  By irreducibility of the $X_i$ and $X$, 
$\sigma(U_1\times \cdots \times U_s)$, as a constructible subset of $X$ of the same dimension, 
contains an open dense subset $U'$ of $X$.
Another application of Theorem~\ref{thm: ingleton} gives an open dense subset 
$U''$ of $X$ such that, if $x\in U''$, then $M(X)$ is isomorphic to the linear 
matroid of the coordinate differentials ${\rm d}z_j$ in $T^*_{x} X$.  Finally, 
Lemma~\ref{lem: sard} implies that $X$ contains an open dense subset $U'''$ of 
regular values of $\sigma$.  Let $U = U'\cap U''\cap U'''$.  This is 
also open and dense in $X$.

Let $U\ni x = x_1 + \cdots + x_s$ be given.  Fix a basis $E$ of $M(X)$.  Then, because 
$x\in U$, $y = \pi_E\circ\sigma(x_1, \ldots, x_s)$ is a regular value of the composed 
map.  Using Lemma~\ref{lem: product cotangent space}, the map $({\rm d}\sigma)^*_x({\rm d}\pi_E)^*_y$
gives a canonical linear embedding 
\[
   ({\rm d}\sigma)^*_x({\rm d}\pi_E)^*_y: (\kk^E)^* \to T^*_{x_1} X_1 \oplus \cdots \oplus T^*_{x_s} X_s.
\]
We now select subspaces $W_i\subseteq T^*_{x_i} X_i$ so that 
\[
    ({\rm d}\sigma)^*_x({\rm d}\pi_E)^*_y: (\kk^E)^* \to W_1 \oplus \cdots \oplus W_s
\]
is an isomorphism, as follows\footnote{For readers familiar with rigidity theory, we note
that this step corresponds to ``tying down'' the rigidity matrix in \cite{WW83}.}.  Let $W$ be the image of $({\rm d}\sigma)^*_x({\rm d}\pi_E)^*_y$.
Greedily select a basis for $W$ by adding vectors in $T^*_{x_1} X_1$ until it is not possible 
to extend the current linearly independent set with another vector in $T^*_{x_1} X_1$.  Then 
move on to $T^*_{x_2} X_2$ repeating the same procedure.  This process will produce a basis for $W$, 
and we define $W_i$ as the linear span of the basis vectors in $T^*_{x_i} X_i$.  Since the $W_i$ are 
contained in independent subspaces, we have $W =  W_1 \oplus \cdots \oplus W_s$.

Now we apply Lemma~\ref{lem: partition} to $({\rm d}\sigma)^*_x({\rm d}\pi_E)^*_y$ 
to get a partition $E = E_1\cup \cdots \cup E_s$ of 
$E$ so that, for each $i$, 
$p_i\circ ({\rm d}\sigma)^*_x({\rm d}\pi_E)^*_y|_{(\kk^{E_i})^*}  :(\kk^{E_i})^* \to W_i$
is an isomorphism.

Let 
$\iota_i : (\kk^{E_i})^*\hookrightarrow (\kk^E)^*$ denote the 
inclusion.  We claim that 
\[
    ({\rm d}\sigma)^*_x({\rm d}\pi_E)^*_y\circ\iota_i = ({\rm d}\pi_{E_i})^*_{x_i}.
\]
By identification of $(\kk^{E_i})^*$ with its image in $(\kk^E)^*$
by the global coordinate frame,
we have $\varphi\in T^*_{x_i} X_i$ iff $\varphi\in (\kk^{E_i})^*$.
Let $\varphi \in (\kk^{E_i})^*$ be given.
By definition, 
$({\rm d}\sigma)^*_x({\rm d}\pi_E)^*_y\circ \iota_i(\varphi) = ({\rm d}\sigma)^*_x({\rm d}\pi_E)^*_y(\varphi)$, 
which is in $W_1 \oplus \cdots\oplus W_s$.  Because the cotangent 
bundle of $\kk^n$ is trivial, ${\rm d}\sigma$ is also the addition map, 
and so the $i$th component of $({\rm d}\sigma)^*_x({\rm d}\pi_E)^*_y(\varphi)$ is simply 
$({\rm d}\pi_{E_i})^*_{x_i}(\varphi)$, which is what we wanted to prove.

Noting that, because the inclusion $\iota_i$ is canonical in the fixed coordinate frame on
$(\kk^n)^*$, 
\[
    ({\rm d}\sigma)^*_x({\rm d}\pi_E)^*_y\circ\iota_i = ({\rm d}\sigma)^*_x({\rm d}\pi_E)^*_y|_{(\kk^{E_i})^*},
\]
from which we get that 
\[
    ({\rm d}\pi_{E_i})^*_{x_i} : (\kk^{E_i})^* \to T^*_{x_i} X_i
\]
is injective.

By Theorem~\ref{thm: ingleton}, which applies because 
$x_i$ is a regular value of $\pi_{E_i}$, $E_i$ is independent in 
$M(X_i)$.  As $i$ was arbitrary, $E$ is independent in 
$M(X_1)\vee \cdots \vee M(X_s)$ as desired.
\end{proof}
If we wish to determine whether $M(X_1+\cdots+X_s)$ is a Terracini union, 
the situation is somewhat subtle.
The following example shows that the failure of $M(X^{\{s\}})$ to equal $sM(X)$ 
does not require $X$ itself to be $s$-defective.
\begin{example}\label{ex: veronese}
Let $X$ be the affine cone in $\mathbb{A}^{10}$ arising from the
image of the cubic Veronese map  $\nu_3:\mathbb{P}^2
\to \mathbb{P}^9$ defined by
\[
  \nu_3([1:s:t]) = [1:s:s^2:s^3:t:st:s^2t:t^2:st^2:t^3].
\]
The matroid $M(X)$ is a rank 3 matroid with 105 bases, and
$M(X^{\{2\}})$ is a rank 6 matroid with 207 bases.
The matroid $2M(X)$ is the uniform matroid of rank 6 on 10 elements
and has 210 bases.
The three subsets of $E$ of cardinality 6 that
fail to be bases of the matroid $M(X^{\{2\}})$ are:
\[
  \{z_0, z_1, z_2, z_4, z_5, z_7\}, \{z_1, z_2, z_3, z_5, z_6,
  z_8\}, \{z_4, z_5, z_6, z_7, z_8, z_9\}.
\]
\vspace{-1cm}
\begin{figure}[H]
  \centering
  \includegraphics[scale = .8]{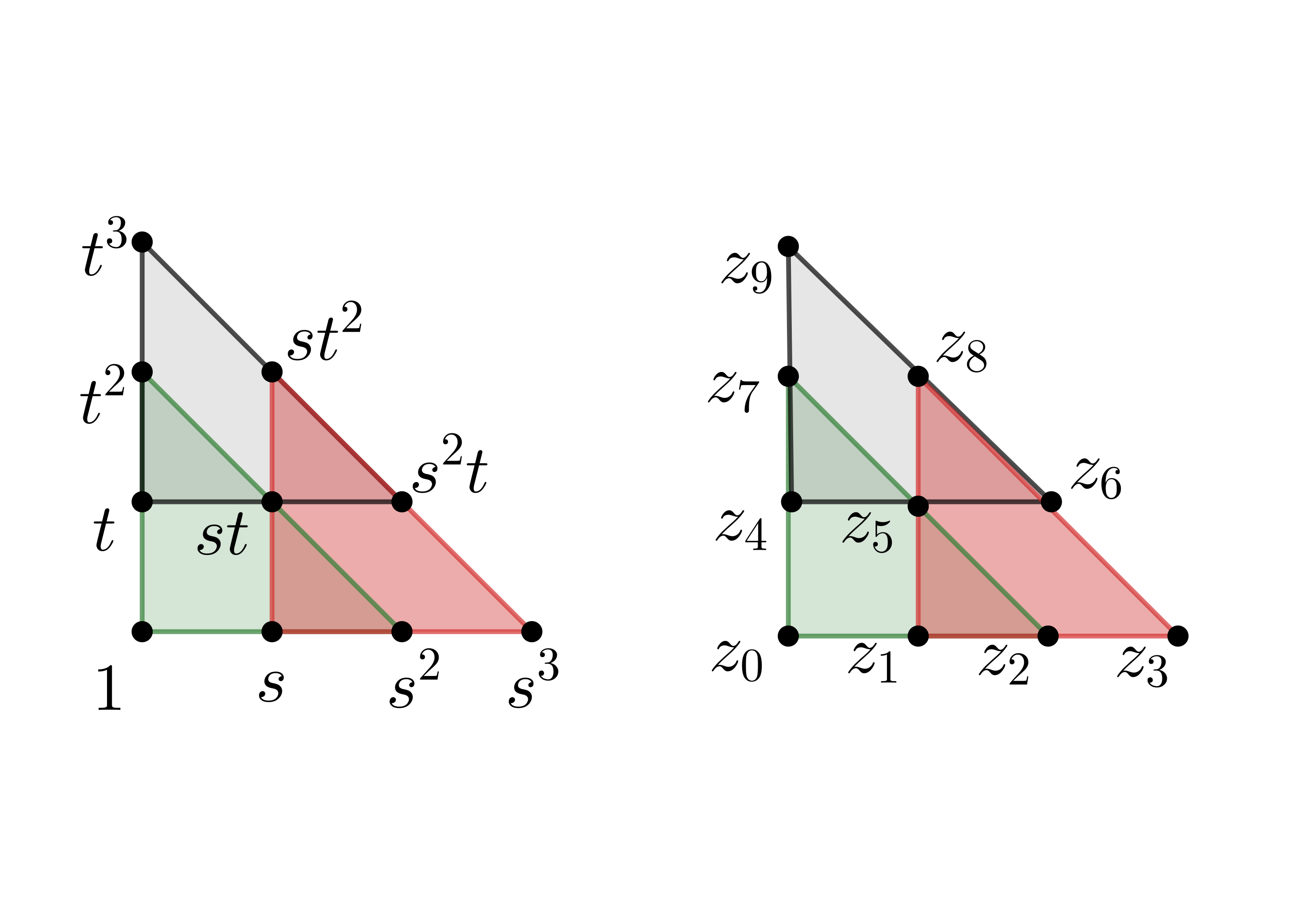}
  \caption{The Veronese embedding of $\mathbb{P}^2$ by cubics has
  three projections to the quadratic Veronese.}
  \label{fig: cubicVeronese}
\end{figure}
In each case, the monomials associated to the coordinates
parameterize $\nu_2(\mathbb{P}^2),$ the quadratic embedding of
$\mathbb{P}^2 \hookrightarrow \mathbb{P}^5$ as we can see from
Figure~\ref{fig: cubicVeronese}.   Moreover, we know that the
variety embedded by these monomials is defective.\exqed
\end{example}

The following theorem 
shows that in general the phenomenon observed in Example~\ref{ex: veronese} exactly 
characterizes when the algebraic matroid of a join fails to be a Terracini union.
\begin{theorem}[Union Theorem]\label{thm: union equality}
Let $\kk$ be a field with $\kk = \overline{\kk}$ and
$\operatorname{char}\kk = 0$.
If $X_1, \ldots, X_s\subseteq \mathbb{A}_\kk^n$ are irreducible affine cones
with join $X$,
then $M(X)$ is a Terracini union
if and only if, there does not exist a basis $B$ of $M(X_1) \vee
\cdots \vee M(X_s)$
such that the join
\[
  \overline{\pi_B(X_1)} + \cdots + \overline{\pi_B(X_s)}
\]
is defective.
\end{theorem}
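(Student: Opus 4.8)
The plan is to characterize the Terracini union property by comparing ranks, since the weak-order inequality of Theorem~\ref{thm: subunion} collapses to equality of matroids precisely when no basis of the union matroid becomes dependent in $M(X)$. First I would set up the dictionary between the combinatorics and the geometry using Theorem~\ref{thm: ingleton}: for a generic point $x=x_1+\cdots+x_s$ with $x_i\in X_i$, the matroid $M(X)$ is the linear matroid of the coordinate differentials ${\rm d}z_i$ in $T^*_xX=T^*_{x_1}X_1+\cdots+T^*_{x_s}X_s$, and each $M(X_i)$ is the linear matroid of the differentials in $T^*_{x_i}X_i$. A set $B$ is independent in $M(X_1)\vee\cdots\vee M(X_s)$ iff it admits a partition $B=B_1\cup\cdots\cup B_s$ with each $B_i$ independent in $M(X_i)$, i.e.\ with each ${\rm d}\pi_{B_i}|_{T^*_{x_i}X_i}$ surjective.

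Next I would translate the defectiveness condition into the same linear-algebraic language. For a basis $B$ of the union matroid, the join $\overline{\pi_B(X_1)}+\cdots+\overline{\pi_B(X_s)}$ sits in $\mathbb{A}^B_\kk$, where $\dim B = \rank(M(X_1)\vee\cdots\vee M(X_s))$. By Terracini's lemma applied to this projected join, its dimension equals the dimension of the sum $\sum_i T_{\pi_B(x_i)}\overline{\pi_B(X_i)} = \sum_i {\rm d}\pi_B(T_{x_i}X_i)$ at a generic point. The expected dimension is $\min\{\sum_i\dim X_i,\ |B|\}$; since $|B|$ equals the union rank, which by the matroid union formula equals $\min_{A\subseteq Z}\bigl(|Z\setminus A|+\sum_i \rank_{M_i}(A)\bigr)$, the join in $\mathbb{A}^B$ is \emph{non}-defective exactly when the projected tangent spaces ${\rm d}\pi_B(T_{x_i}X_i)$ span a subspace of $\kk^B$ of the full expected dimension.

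The heart of the argument is then a rank comparison. On one side, $M(X)$ is a Terracini union iff every basis $B$ of the union matroid is also independent (hence a basis, by equal rank) in $M(X)$, which by the linear model means ${\rm d}\pi_B : T^*_xX\to \kk^B$ is injective, equivalently that $\sum_i {\rm d}\pi_B(V_i)$ has dimension $|B|=\rank M(X)$ where $V_i=T^*_{x_i}X_i$. On the other side, non-defectiveness of the projected join says exactly that $\sum_i {\rm d}\pi_B(V_i)$ attains its expected dimension in $\kk^B$. The key point is that these two conditions coincide: since $B$ is a union basis, $\rank M(X)=|B|=\min\{\sum_i\dim X_i, |B|\}$ is forced to be the expected dimension of the projected join, so ``$B$ is a basis of $M(X)$'' and ``the projected join over $B$ is non-defective'' are the same statement about whether ${\rm d}\pi_B$ restricted to the summand tangent spaces is as large as expected. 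I would therefore prove both directions by contraposition: a union basis $B$ that fails to be a basis of $M(X)$ produces a defective projected join, and conversely a defective projected join over some union basis $B$ certifies that $B$ is dependent in $M(X)$, witnessing $M(X)\prec M(X_1)\vee\cdots\vee M(X_s)$.

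I expect the main obstacle to be the careful handling of genericity and the interchange of projection with Terracini's lemma. Specifically, one must ensure that a \emph{single} generic point $x=x_1+\cdots+x_s$ simultaneously computes $M(X)$, all the $M(X_i)$, and the tangent spaces of all the projected joins $\overline{\pi_B(X_i)}$ as $B$ ranges over the finitely many union bases; this requires intersecting finitely many dense open sets and verifying that ${\rm d}\pi_B(T_{x_i}X_i)=T_{\pi_B(x_i)}\overline{\pi_B(X_i)}$ at the generic point, which uses that $\pi_B$ is dominant onto $\overline{\pi_B(X_i)}$ together with generic smoothness in characteristic zero. Once this genericity bookkeeping is in place, the equivalence reduces to the observation above that ``full-rank projection of the Minkowski sum of tangent spaces'' is the common meaning of both sides.
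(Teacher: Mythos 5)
Your proposal is correct, and it shares the paper's skeleton: both arguments reduce the Terracini union property to the single equivalence ``a basis $B$ of $M(X_1)\vee\cdots\vee M(X_s)$ is independent in $M(X)$ if and only if the projected join over $B$ is non-defective,'' using that projection commutes with join (so the projected join is $\overline{\pi_B(X)}$) and that the expected dimension of the projected join over a union basis is exactly $|B|$ because $B$ partitions into sets $B_i$ independent in the $M(X_i)$. Where you genuinely diverge is in how that equivalence is certified. The paper stays entirely on the ideal-theoretic side: a union basis $B$ dependent in $M(X)$ yields a nonzero $f\in\II(X)\cap\kk[B]$, the Closure Theorem identifies this elimination ideal with $\II(\overline{\pi_B(X)})\subseteq\II(Y)$, and $f$ certifies $\dim Y<|B|$; conversely, a nonzero element of $\II(Y)$ lies in $\II(X)\cap\kk[B]$ because $X$ is contained in the cone over $Y$. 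You instead run everything through tangent spaces: Theorem~\ref{thm: ingleton}, Terracini's lemma applied both to $X$ and to each projected join, and generic smoothness in characteristic zero to identify $T_{\pi_B(x_i)}\overline{\pi_B(X_i)}$ with ${\rm d}\pi_B(T_{x_i}X_i)$, so that both sides of the equivalence become the single statement $\dim\sum_i{\rm d}\pi_B(T_{x_i}X_i)=|B|$. Your route buys a uniform linear-algebraic picture (the same one underlying Theorem~\ref{thm: subunion}) at the cost of the genericity bookkeeping you correctly flag — one must intersect finitely many dense open sets over all union bases $B$ — whereas the paper's ideal containments hold with no genericity hypotheses at all. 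Two cosmetic points: the expected dimension of the projected join is $\min\{\sum_i\dim\overline{\pi_B(X_i)},\,|B|\}$ rather than $\min\{\sum_i\dim X_i,\,|B|\}$ (both minima equal $|B|$ here, for the reason you give), and the detour through the matroid-union rank formula is unnecessary, since the partition of $B$ already gives $\sum_i\dim\overline{\pi_B(X_i)}\ge|B|$.
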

\begin{proof}
Let $X = X_1 + \cdots + X_s$ and $M_\vee =  M(X_1) \vee \cdots \vee M(X_s)$.
Suppose that $M(X) \neq M_\vee$.  We first show that there is a
basis $B$ of $M_\vee$ such that
\[
  \overline{\pi_B(X_1)} + \cdots + \overline{\pi_B(X_s)}
\]
is defective.

By Theorem~\ref{thm: subunion} and the hypothesis that $M(X)\neq M_\vee$,
we get $M(X)\prec M_\vee$, so there is a basis $B$ of $M_\vee$
that is dependent in $M(X)$.  For this $B$, the map 
$\pi_B : X\to \kk^B$ is not dominant by Lemma~\ref{lem: independent iff dominant}. Using linearity of $\pi_B$, 
we conclude that
\[
    \dim \overline{\pi_B(X)} = \dim \overline{ \pi_B(X_1) + \cdots + \dim \pi_B(X_s)} < |B|.
\]
Because the join
\[
    Y = \overline{\pi_B(X_1)} + \cdots + \overline{\pi_B(X_s)}\subseteq \overline{\pi_B(X)}
\]
is dense in  $\overline{\pi_B(X)}$, the two sets have the same dimension.  
To finish this direction, we compute the expected dimension of $Y$. Because $B$ is a
base of $M_\vee$, there is a partition of $B$ into sets $B_i$
that are, for each $i$, independent in $M(X_i)$.  Since $B_i$ is 
independent in $M(X_i)$, for each $i$, $\pi_{B_i} : X_i\to \kk^{B_i}$ is
dominant, so
$\dim \overline{\pi_{B}(X_i)} \ge \dim \overline{\pi_{B_i}(X_i)} = |B_i|$.
Adding up these inequalities, the expected dimension of
$Y$ is at least $|B| = \dim \kk^B$.  Since $Y\subseteq \kk^B$,
equality holds.  As $\dim Y < |B|$, it is defective.

For the other direction, we suppose that there is a basis
$B$ of $M_\vee$ such that
\[
  Y = \overline{\pi_B(X_1)} + \cdots + \overline{\pi_B(X_s)}
\]
is defective.  As $Y$ is dense in $\pi_B(X)$, this implies that 
$\dim \pi_B(X)$ is less than the expected dimension, which is at most 
$B$.  Hence $\pi_B : X\to \kk^B$ is not dominant, so $B$ is 
dependent in $M(X)$ by Lemma~\ref{lem: independent iff dominant}, 
which gives $M(X) \neq M_\vee$.
\end{proof}
This next example shows that it is possible for a defective
join to have a matroid that is a Terracini union.
\begin{example}\label{ex: coloop extension}
Let $S = \kk[z_1,z_2, z_3, z_4, z_5]$ and $S' = \kk[z_1,z_2, z_3,
z_4, z_5, z_6]$.  Consider
\[
  A =
  \begin{bmatrix}
    z_1 & z_ 2& z_3 \\
    z_2 & z_3 & z_4 \\
    z_3 & z_4 & z_5
  \end{bmatrix}
\]
and let $g: \mathbb{A}^5 \to \mathbb{A}^5$ be a general linear change
of coordinates.  Let $X$ be the variety whose ideal
$\II(X)\subseteq S$ is generated by the $2\times 2$ minors
of $g(A)$ and $X' \subseteq \mathbb{A}^6$ be the variety whose
ideal $\II(X') \subseteq S'$ has the same generators.

In geometric terms, $\II(X)$ is the homogeneous ideal of a rational
normal curve of degree 4 in $\mathbb{P}^4.$  So, $\dim X^{\{2\}} = 2
\dim X^{\{2\}} = 4$, and $X$ is non-defective. The geometric
interpretation of $X'$ is that it is a cone over $X$, and $\dim X' =
\dim X+1 = 3.$  Moreover,  $(X')^{\{2\}}$ is also a cone
over $X^{\{2\}}$, and
$\dim (X')^{\{2\}} = 1 + \dim X^{\{2\}} = 5 <6,$ so $X'$ is defective

On the matroidal side, $M(X)$ is the rank $2$ uniform matroid on
$\{z_1, \ldots, z_5\}$ and  $M(X')$ is the extension of this matroid
by the coloop $z_6.$
Direct computations show that $M((X')^{\{2\}})$
is the extension of the rank $4$ uniform matroid on $\{z_1, \ldots,
z_5\}$ by the
coloop $z_6$.   From the definition of
$2M(X)$, we
get that $2M(X)$ is the extension of the rank $4$ uniform matroid
on $\{z_1, \ldots, z_5\}$ by the
coloop $z_6$, so $2M(X') = M((X')^{\{2\}})$.

Therefore, we have shown that it may be the case that $X'$ is
defective and yet $2M(X') = M((X')^{\{2\}})$.  We note that one can
also use Theorem~\ref{thm: union equality} to see that $M((X')^{\{2\}}) = 2M(X')$
and leave this as an exercise for the reader.
\exqed
\end{example}
\begin{remark}
Note that $\rank (M(X_1)\vee \cdots \vee M(X_n))$ may not be the same
as the expected
rank of $M(X_1+\cdots +X_n)$.  Indeed, in Example~\ref{ex: coloop extension}
$\rank 2M(X') = 5,$ but the expected rank of $M((X')^{\{2\}})$ is 6.
\exqed
\end{remark}

In general, we can also investigate the rank of an arbitrary subset
$E\subseteq Z$ in $M(X_1+\cdots +X_n)$.  
Using Theorem~\ref{thm: subunion} we see that the rank of 
$E$ is the maximum of $\rank_{M(X_1+\cdots +X_n)}(A)$, 
where $A$ ranges over subsets of $E$ that are independent in 
$M(X_1)\vee \cdots \vee M(X_n)$.
Furthermore, we know that if $A$ is independent in
$M(X_1+\cdots +X_n)$ then $|A| \leq \dim X_1+\cdots +\dim X_n,$
which provides another restriction on the subsets $A\subseteq E$ that we need to check in 
order to compute $\rank (E)$.

One might attempt to estimate $\rank_{M(X_1+\cdots +X_n)}(E)$ using the join defect of 
$X_1+\cdots +X_n,$. However, Example~\ref{ex: bolker-roth} shows that  $\pi_E(X_1+\cdots +X_n)$ 
can have a larger join defect, which in turn causes the rank to be smaller than expected 
based on the join defect of $X_1+\cdots +X_n$.

\begin{example}\label{ex: bolker-roth}
Let us recall some facts about secant varieties of Veronese and Segre varieties.  
Let $S(n;r)$ denote the determinantal variety of $n \times n$ symmetric matrices of rank
at most $r$ (an affine cone over a secant variety of a Veronese variety), 
and let $\Sigma(m,n;r)$ denote the variety of $m \times n$ matrices of rank at most 
$r \le \min\{m,n\}$ (an affine cone over a secant variety of a Segre variety).
When $\operatorname{char}\kk \neq 2$, we have 
$S(n;r) = S(n;1)^{\{r\}}$ by elementary linear algebra, 
and similarly $\Sigma(m,n;r) = \Sigma(m,n;1)^{\{r\}}$.
These $r$-secants have dimensions 
$\dim S(n;r) = rn - \binom{r}{2}$ and 
$\dim \Sigma(m,n;r) = r(m + n - r)$, respectively.

Let $A = (a_{ij})$ be the $8 \times 8$ generic symmetric matrix.  
Then $S(8;1)$ is defined by the $2 \times 2$ minors of $A$, and 
$S(8;1)^{\{2\}} = S(8;2)$ is defined by the $3 \times 3$ minors of $A$.  
Since $\dim S(8;1) = 8$ and $\dim S(8;2) = 2 \cdot 8 - \binom{2}{2} = 15 < 2 \dim S(8;1) = 16$, 
the $2$-secant defect of $S(8;1)$ is $1$.

Now let $A'$ be the $4 \times 4$ upper right-hand block of $A$, and let $E$ denote the set of entries~in~$A'$.  
Since $A'$ is a generic $4 \times 4$ matrix, we have $\overline{\pi_E(S(8;1))} = \Sigma(4,4;1)$.  
The Segre variety $\Sigma(4,4;1)$ has dimension $7$, and 
$\dim \Sigma(4,4;1)^{\{2\}} = \dim \Sigma(4,4;2) = 2(4+4-2) = 12$.  
However, the expected dimension of $\Sigma(4,4;1)^{\{2\}}$ is $2 \cdot 7 = 14$, 
hence the $2$-secant defect of $\overline{\pi_E(S(8;1))}$ is $2$.

Therefore, the rank of $E$ in $M(S(8;1)^{\{2\}})$ is $12$, which is one less than 
we would have predicted based on the secant defect of $S(8;1)$. 
\exqed
\end{example}

\section{Applications}\label{sec:exam}
In this section we present applications of the Union Theorem (Theorem~\ref{thm: union
equality}).  Many of the examples discussed here arise in projective geometry, where we work with varieties 
$X \subseteq \mathbb{P}^{N-1}$ rather than their affine cones in
$\mathbb{A}^N.$ 

Since defective varieties are
rare, we usually expect equality to hold in sufficiently general
coordinates.   In fact, since a projective curve is never defective,
we will show in Section~\ref{subsec: curves}
that $M(X^{\{s\}})$ is always a Terracini union if $X$ is a projective curve.  In higher dimensions, the question is
especially interesting when coordinates are chosen so that $\II(X)$ is
generated by sparse polynomials.  Motivated by this philosophy, we
investigate this phenomenon for toric varieties and their secant varieties in Section~\ref{subsec: toric}.

\subsection{Curves}\label{subsec: curves}
We show that the algebraic matroids of secant varieties of projective curves are all uniform.

\begin{theorem}
If $X \subseteq \mathbb{P}^{n-1}$ is a nondegenerate irreducible
curve, then $M(X^{\{s\}}) = sM(X)$.  In particular, $M(X^{\{s\}})$
is the uniform matroid of rank $\min\{2s, n\}$ on $Z.$
\end{theorem}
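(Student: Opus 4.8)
The plan is to reduce the statement to the classical fact that secant varieties of a nondegenerate projective curve are never defective, and then feed this into the Union Theorem (Theorem~\ref{thm: union equality}). Throughout, $M(-)$ refers to the matroid of the affine cone, as in the rest of the paper.

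First I would identify $M(X)$ itself. Since $X$ is a curve, its affine cone has dimension $2$, so $\rank M(X) = 2$; the content is that \emph{every} pair $\{z_i,z_j\}$ is independent, so that $M(X)$ is the uniform matroid of rank $2$ on $Z$, which I will write $U_{2,N}$. A dependence would be a nonzero $f \in \II(X) \cap \kk[z_i,z_j]$, and since $\II(X)$ is homogeneous this $f$ may be taken homogeneous; as $\kk = \overline{\kk}$, it factors into linear forms $\alpha z_i - \beta z_j$, and primality of $\II(X)$ forces one such nonzero linear form into $\II(X)$, contradicting nondegeneracy of $X$. Hence $M(X) = U_{2,N}$, and therefore $sM(X)$ is the $s$-fold union $sU_{2,N}$. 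A union of uniform matroids on a common ground set is again uniform, so $sM(X)$ is the uniform matroid of rank $\min\{2s,N\}$ on $Z$; this gives the ``in particular'' clause once equality is proved.

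Next I would invoke the Union Theorem. By Theorem~\ref{thm: subunion} we already have $M(X^{\{s\}}) \preceq sM(X)$, and by Theorem~\ref{thm: union equality} equality holds if and only if no basis $B$ of $sM(X)$ makes the join $\overline{\pi_B(X)} + \cdots + \overline{\pi_B(X)}$ ($s$ copies) defective. A basis $B$ is any subset with $|B| = \min\{2s,N\}$, and since $s \ge 1$ and $N \ge 2$ we have $|B| \ge 2$. I then claim $\overline{\pi_B(X)} \subseteq \mathbb{A}^B$ is again a nondegenerate curve cone. By the Closure Theorem its ideal is $\II(X) \cap \kk[z_i : i \in B]$, so a linear form vanishing on $\overline{\pi_B(X)}$ would be a nonzero linear form in $\II(X)$, again contradicting nondegeneracy of $X$; and because every $2$-subset is independent in $M(X) = U_{2,N}$ and $|B| \ge 2$, the image has dimension exactly $2$. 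Thus $\overline{\pi_B(X)}$ is the cone over a nondegenerate irreducible curve in $\mathbb{P}^{|B|-1}$.

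Finally, the classical non-defectivity of secants of nondegenerate projective curves gives $\dim\,\overline{\pi_B(X)}^{\{s\}} = \min\{2s,|B|\} = |B|$, so this join fills $\mathbb{A}^B$ and is non-defective. As this holds for every basis $B$, the Union Theorem yields $M(X^{\{s\}}) = sM(X) = U_{\min\{2s,N\},N}$, as desired. The one non-combinatorial ingredient, and the step I expect to be the crux, is the non-defectivity of curve secants; I would either cite it or sketch the standard argument via the general (uniform) position principle, namely that a general set of $s$ points on a nondegenerate curve spans a subspace of the maximal possible dimension. The subtler verification is the inheritance of nondegeneracy by all coordinate projections, which is exactly where the hypothesis that $X$ is nondegenerate (rather than merely irreducible) enters.
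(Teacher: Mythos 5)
Your proposal is correct and follows essentially the same route as the paper: establish $M(X)=U_{2,N}$ via factoring a homogeneous two-variable relation into linear forms and invoking primality and nondegeneracy, then apply the Union Theorem after checking that each coordinate projection $\overline{\pi_B(X)}$ is a nondegenerate curve cone, whose secants are non-defective by the classical result (the paper cites Theorem~10.11 of \cite{eh} for this). Your write-up is in fact slightly more explicit than the paper's about why $\overline{\pi_B(X)}$ inherits nondegeneracy and has dimension exactly $2$, which is a welcome addition.
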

\begin{proof}
We begin by showing that $M(X)$ is the uniform matroid of rank 2 on $Z.$
Let $i \neq j \in Z.$  Suppose for contradiction, that $\{z_i,
z_j\}$ is dependent.  Then there exists a nonzero homogeneous
polynomial $f \in \II(X) \cap \mathbb{C}[z_i,z_j].$  Since
every homogeneous polynomial in two variables factors into linear
factors, and $\II(X)$ is prime, a linear form in $z_i$
and $z_j$ is in $\II(X)$. Since $X$ is nondegenerate, this is a
contradiction, since $X$ cannot be contained in any hyperplane.

By Theorem 10.11 of \cite{eh} as a nondegenerate curve,
$X$ is not $s$-defective. Therefore,
$\dim \widetilde{X^{\{s\}}} = \rank M(X^{\{s\}}) = \min\{2s, N\}.$
If $B$ is a basis of $sM(X)$, since $\overline{\pi_B(X)}$ must be nondegenerate,
Theorem 10.11 of \cite{eh} implies that $\overline{\pi_B(X)}$ is also
non-defective. Theorem~\ref{thm: union equality} then implies that
$M(X^{\{s\}})
= sM(X).$  Since $M(X)$ is the uniform matroid of rank 2, $sM(X)$
is also a uniform matroid.
\end{proof}

\subsection{Toric varieties}\label{subsec: toric}
We present results for projective toric surfaces and 3-folds.  We also discuss our results in 
the context of the results of~\cite{lmr} in Section~\ref{subsec:Laface}.

Interestingly, although defective toric varieties of dimension at least two are rare (see
\cite{cox2007secant}), defective toric projections are quite common.  
Using Theorem~\ref{thm: union equality}, we can show that Example~\ref{ex: veronese} 
generalizes to arbitrary toric surfaces.
\begin{theorem}\label{thm: surface}
Let $X$ be a toric surface corresponding to a lattice polytope $P$.
If $P$ contains a lattice polygon that is a translate
of the convex hull of $\{(0,0), (2,0), (0,2)\},$
then $M^{\{2\}} \neq 2M(X)$.
\end{theorem}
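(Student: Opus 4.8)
The statement to prove is $M(X^{\{2\}}) \neq 2M(X)$, and the natural tool is the Union Theorem (Theorem~\ref{thm: union equality}) specialized to $X_1 = X_2 = X$ and $s = 2$. By that theorem, it suffices to exhibit a single basis $B$ of $2M(X)$ for which the join $\overline{\pi_B(X)} + \overline{\pi_B(X)} = \overline{\pi_B(X)}^{\{2\}}$ is defective. I would first recall the standard description of the algebraic matroid of a projective toric variety: with ground set indexed by the lattice points $A = P \cap \mathbb{Z}^2$, the matroid $M(X)$ is the linear matroid of the homogenized exponent vectors $\{(1,m) : m \in A\} \subseteq \kk^3$, so that a subset of coordinates is independent exactly when the corresponding lattice points are affinely independent. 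In particular $\rank M(X) = 3$ (the affine cone over a toric surface is $3$-dimensional), and affine independence of a triple means the three lattice points are not collinear.

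\textbf{Producing the basis $B$.} By hypothesis $P$ contains a translate $a + T$ of $T = \operatorname{conv}\{(0,0),(2,0),(0,2)\}$, whose six lattice points are $(0,0),(1,0),(2,0),(0,1),(1,1),(0,2)$. Let $B$ be the set of six coordinates of $X$ indexed by $a + (T \cap \mathbb{Z}^2)$. Since affine independence is translation invariant, I may test the partition on the untranslated points. I would verify that $B$ is independent in $2M(X)$ by splitting it into the two non-collinear triples $\{(0,0),(2,0),(0,2)\}$ and $\{(1,0),(0,1),(1,1)\}$; each triple is affinely independent, hence independent in $M(X)$, so $B$ is a union of two $M(X)$-independent sets and therefore independent in $2M(X)$. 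As $|B| = 6$ and each $M(X)$-independent set has size at most $3$, the union matroid $2M(X)$ has rank exactly $6$, so $B$ is in fact a basis of $2M(X)$.

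\textbf{Defectivity of the projection.} I would then identify $\overline{\pi_B(X)}$. The toric variety $X$ is parameterized in the cone by $(t,\lambda) \mapsto (\lambda\, t^{m})_{m \in A}$, so projecting to $B$ gives $(\lambda\, t^{a+v})_{v \in T \cap \mathbb{Z}^2} = \lambda\, t^{a}\,(t^{v})_{v}$; absorbing the common factor $\lambda\, t^{a}$ into the cone scaling shows that $\overline{\pi_B(X)}$ is the affine cone over the quadratic Veronese surface $\nu_2(\mathbb{P}^2) \subseteq \mathbb{P}^5$, i.e.\ the variety of rank-one symmetric $3 \times 3$ matrices. Its second secant is the classical rank-$\le 2$ determinantal hypersurface, of affine-cone dimension $5$ rather than the expected $\min\{2\cdot 3, 6\} = 6$; thus $\overline{\pi_B(X)}^{\{2\}}$ is $2$-defective. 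Applying Theorem~\ref{thm: union equality} to the basis $B$ now yields $M(X^{\{2\}}) \neq 2M(X)$.

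\textbf{Main obstacle.} The conceptual core (the Veronese is defective) is classical; the delicate bookkeeping is checking that $\overline{\pi_B(X)}$ really is a cone over the \emph{standard} quadratic Veronese despite the arbitrary translate $a$ and the extra cone variable, and confirming that the absorption of $\lambda\, t^{a}$ does not change the variety. I expect this identification, together with the explicit non-collinear partition certifying $B \in 2M(X)$, to be the step requiring the most care, while the defect computation itself reduces to the well-known geometry of $2 \times 2$ versus $3 \times 3$ symmetric minors.
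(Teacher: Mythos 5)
Your proposal is correct, but it takes a genuinely different route from the paper's proof. You apply the Union Theorem (Theorem~\ref{thm: union equality}): you exhibit the six lattice points of the translated triangle as a basis $B$ of $2M(X)$ via the partition into the non-collinear triples $\{(0,0),(2,0),(0,2)\}$ and $\{(1,0),(0,1),(1,1)\}$, identify $\overline{\pi_B(X)}$ with the cone over $\nu_2(\mathbb{P}^2)$, and quote its classical $2$-defectivity. The paper instead gives a direct certificate of dependence without routing through Theorem~\ref{thm: union equality}: it arranges the six coordinates into a $3\times 3$ symmetric matrix $A$, checks that the $2\times 2$ minors of $A$ lie in $\II(X)$ (after substituting monomials the matrix becomes $s^at^b$ times a rank-one matrix), observes that every first partial of $\det A$ is such a minor, and concludes $\det A\in\II(X^{\{2\}})$ by the prolongation fact for cubics; independence in $2M(X)$ is checked with the partition corresponding to $\{1,s,t^2\}$ and $\{t,st,s^2\}$ (a different but equally valid partition). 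The paper's argument is self-contained and produces an explicit element of the elimination ideal supported on $B$, whereas yours is shorter at the price of importing the defectivity of the quadratic Veronese and the standard identification of the algebraic matroid of a toric variety with the linear matroid on homogenized exponent vectors (a fact the paper also uses implicitly, e.g.\ in Theorem~\ref{thm: threefold}). One point worth making explicit in your write-up: absorbing the factor $\lambda s^at^b$ into the cone parameter is legitimate because $s^at^b$ is nonzero on the dense torus, so the two parameterizations of $\pi_B(X)$ agree there and hence have the same Zariski closure.
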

\begin{proof}
Let $U = \{z_i, \ldots, z_{i+5}\}$ correspond to the monomials indicated in
Figure~\ref{fig: veronese}.
\begin{figure}[H]
  \centering
  \includegraphics[width=0.6\linewidth]{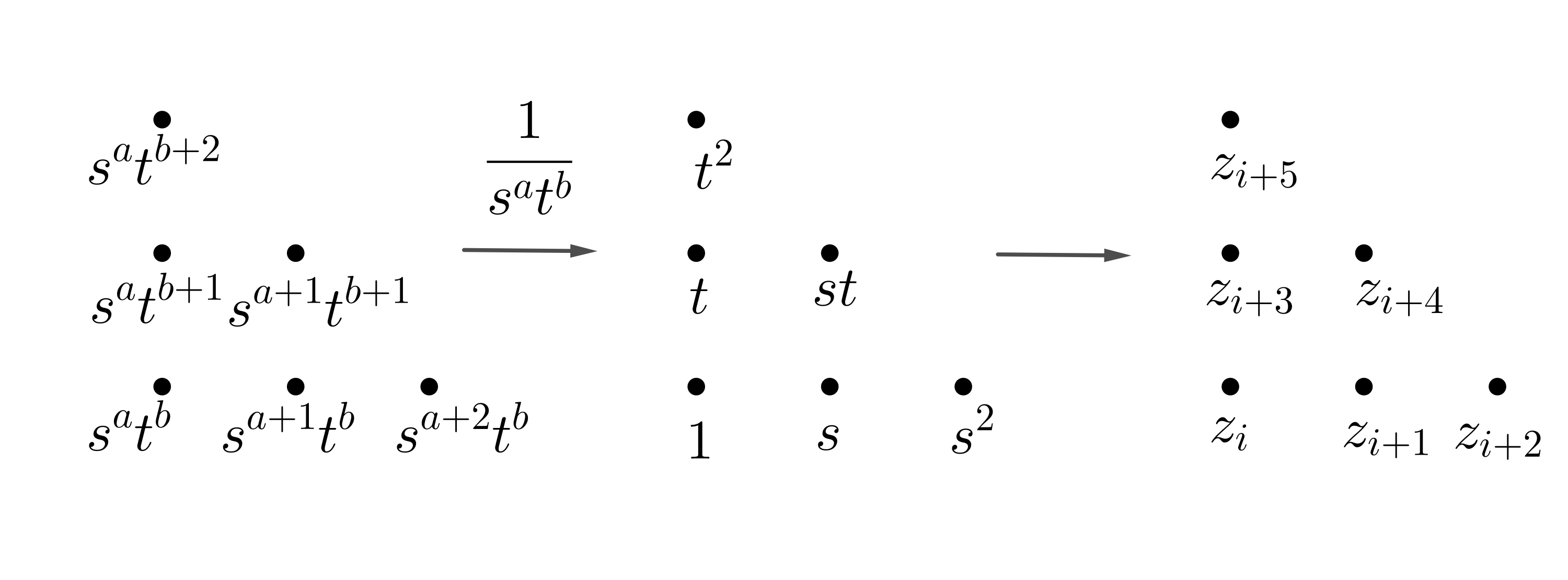}
  \caption{Monomials corresponding to the quadratic Veronese.}
  \label{fig: veronese}
\end{figure}
First, we will show that $U$ is dependent in $M^{\{2\}}.$  Define
\[
  A =
  \begin{bmatrix}
    z_i & z_{i+1} & z_{i+3}\\
    z_{i+1} & z_{i+2}& z_{i+4}\\
    z_{i+3} & z_{i+4} & z_{i+5}
  \end{bmatrix}.
\]
Substituting the corresponding monomials for the $z_i$ into $A$ below
shows that the  $2 \times 2$ minors of $A$ vanish on the torus
embedding and hence are in $\II(X).$
\[
  \begin{bmatrix}
    s^at^b & s^{a+1}t^b & s^at^{b+1}\\
    s^{a+1}t^b & s^{a+2}t^b & s^{a+1}t^{b+1}\\
    s^at^{b+1} & s^{a+1}t^{b+1} & s^at^{b+2}\\
  \end{bmatrix}
  = s^at^b
  \begin{bmatrix}
    1 & s & t\\
    s & s^2 & st\\
    t & st & t^2
  \end{bmatrix},
\]
Moreover, each of the first partials of $\det A$
is in $\II(X),$ so $\det A \in \II(X^{\{2\}})$.  We
conclude that $U$ is dependent in $M^{\{2\}}.$

Now we show that $U$ is independent in $2M(X)$.  Observe that there
are no relations on the monomials $s^at^b\{1,s,t^2\}$ and
$s^at^b\{t, st, s^2\}$.    Therefore, $\{z_i, z_{i+1}, z_{i+5}\}$
and $\{z_{i+2}, z_{i+3}, z_{i+4}\}$ are independent in $M(X).$
Since $U$ is the union of these two disjoint bases, $U$ is
independent in $2M(X).$
\end{proof}

For the Veronese embedding of degree $d$ we can give a lower bound on
how many bases of $2M(X)$ fail to be bases of $M(X^{\{2\}}).$

\begin{corollary}\label{cor: veronese}
Let $d > 2$ and $X$ be the image of $\nu_d:\mathbb{P}^2 \to
\mathbb{P}^{\binom{d+2}2-1}$ via
monomials of degree $d$.  Then $M^{\{2\}} \neq 2M(X)$.  Moreover,
there are at least
$\binom{d}2$ bases of $2M(X)$ that fail to be bases of $M^{\{2\}}$.
\end{corollary}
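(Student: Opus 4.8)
The plan is to apply the construction in the proof of Theorem~\ref{thm: surface} once for each lattice placement of the standard size-$2$ simplex inside the polytope of $\nu_d(\mathbb{P}^2)$, and then to count the placements. The polytope of the degree-$d$ Veronese surface is the dilated simplex $P = d\Delta = \{(a,b)\in\mathbb{Z}_{\geq 0}^2 : a+b\leq d\}$, whose lattice points index the coordinates of $\mathbb{A}^N$ through the monomials $s^at^b$. A translate of $\operatorname{conv}\{(0,0),(2,0),(0,2)\}$ by an integer vector $(a_0,b_0)$ consists of the lattice points $(a_0,b_0),(a_0+1,b_0),(a_0+2,b_0),(a_0,b_0+1),(a_0+1,b_0+1),(a_0,b_0+2)$, and it is contained in $P$ exactly when $a_0,b_0\geq 0$ and $a_0+b_0\leq d-2$.

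First I would fix the rank, noting that since $X$ is the affine cone over a surface we have $\operatorname{rank} M(X)=3$, hence $\operatorname{rank}(2M(X))\leq\min\{6,N\}=6$ for $d>2$; once a single independent $6$-element set is exhibited, this bound is attained and every $6$-element set independent in $2M(X)$ is automatically a basis. Next, for each admissible corner $(a_0,b_0)$ I would invoke the argument of Theorem~\ref{thm: surface} applied to the shifted six-element set $U_{(a_0,b_0)}$ indexed by the lattice points above. By translation invariance of the toric relations, that proof goes through verbatim: assembling these monomials into the symmetric matrix $A$ shows that $\det A$ together with each of its first partials lies in $\II(X)$, so $\det A\in\II(X^{\{2\}})$ and $U_{(a_0,b_0)}$ is dependent in $M^{\{2\}}$; while the splitting of $U_{(a_0,b_0)}$ into the relation-free triples $s^{a_0}t^{b_0}\{1,s,t^2\}$ and $s^{a_0}t^{b_0}\{t,st,s^2\}$ expresses it as a disjoint union of two bases of $M(X)$, so it is independent, and hence a basis, of $2M(X)$. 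Thus each $U_{(a_0,b_0)}$ is a basis of $2M(X)$ that is dependent, and therefore not a basis, in $M^{\{2\}}$.

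Finally I would establish distinctness and count. The convex hull of $U_{(a_0,b_0)}$ is the triangle with vertices $(a_0,b_0),(a_0+2,b_0),(a_0,b_0+2)$, so the corner $(a_0,b_0)$, the unique vertex minimizing $a+b$, is recoverable from the set, and distinct corners give distinct bases. The admissible corners are exactly the lattice points of $(d-2)\Delta$, of which there are $\binom{d}{2}$, yielding at least $\binom{d}{2}$ bases of $2M(X)$ that fail to be bases of $M^{\{2\}}$; in particular $M^{\{2\}}\neq 2M(X)$ since $\binom{d}{2}>0$ for $d>2$. The only delicate points are the distinctness of placements and the lattice-point count, and both reduce to the elementary fact that a translated standard simplex is recovered from its set of lattice points, so I do not anticipate any genuine obstacle beyond this bookkeeping.
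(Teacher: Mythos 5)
Your proof is correct and takes essentially the same approach as the paper: both arguments count the translates of $\operatorname{conv}\{(0,0),(2,0),(0,2)\}$ inside the degree-$d$ triangle and apply the construction from the proof of Theorem~\ref{thm: surface} to each translate, arriving at the count $\binom{d}{2}$. The only differences are bookkeeping --- you enumerate admissible corners as the lattice points of the dilated simplex $(d-2)\Delta$ while the paper sums $k$ translates over the levels $k=1,\ldots,d-1$, and you additionally spell out the distinctness of the resulting six-element sets and the rank argument that the paper leaves implicit.
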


\begin{proof}
For the second statement, note that the embedding $\nu_d$
corresponds to the lattice polytope $P_d$ with vertices $(0,0),
(d,0),$ and $(0,d).$  For $k = 1, \ldots, d-1$ there are $k$
translates of $P_2$ on the $(d-k)$th level of $P_d.$  (See
Figure~\ref{fig: cubicVeronese} for the case $d=3$.)  Summing, we
have $1+ \cdots + (d-1) = \binom{d}2$ such triangles, each of which
corresponds to a base of $2M(X)$.
\end{proof}

To get a similar result for toric threefolds, we need to produce a
defective toric variety with exactly 8 lattice points that can be
partitioned into two independent sets of four points.
\begin{theorem}\label{thm: threefold}
Let $P$ be the convex hull of $\{(0,0,1), (1,0,2),(0,2,1),(2,2,1),
(1,1,0)\}.$  If $Q \subseteq \mathbb{R}^3$ is a lattice polytope
containing a $PGL(3,\mathbb{Z})$-equivalent translation of $P$,
then $M(X_Q^{\{2\}}) \neq 2M(X_Q).$
\end{theorem}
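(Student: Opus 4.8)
The plan is to apply the Union Theorem (Theorem~\ref{thm: union equality}) with $s = 2$ and $X_1 = X_2 = X_Q$. By that theorem, to show $M(X_Q^{\{2\}}) \neq 2M(X_Q)$ it suffices to exhibit a single basis $B$ of $2M(X_Q)$ for which the join $\overline{\pi_B(X_Q)} + \overline{\pi_B(X_Q)}$ is defective. The natural candidate for $B$ is the set of eight coordinates corresponding to the eight lattice points of a copy of $P$ sitting inside $Q$. The strategy mirrors the proof of Theorem~\ref{thm: surface}: first I would verify that these eight coordinates form a basis of $2M(X_Q)$ by partitioning them into two sets of four, each independent in $M(X_Q)$; then I would verify that the projection to these coordinates yields a $2$-defective toric threefold.

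First I would set up the projection. Since $Q$ contains a $PGL(3,\mathbb{Z})$-equivalent translate of $P$, and such lattice-equivalences together with translations induce monomial changes of coordinates that are isomorphisms of the torus, I may assume $P$ itself sits in $Q$. The projection $\pi_B$ onto the eight coordinates indexed by the lattice points of $P$ sends $X_Q$ dominantly onto $\overline{\pi_B(X_Q)} = X_P$, the toric threefold associated to $P$ (here $\dim X_P = 3$ because $P$ is full-dimensional in $\mathbb{R}^3$). The heart of the argument is then a computation about $P$ alone: I must check (i) that the eight lattice points of $P$ split into two four-element subsets, each of which indexes an independent set in $M(X_P)$ — equivalently, each giving a dominant coordinate projection of the $3$-dimensional $X_P$, so each four-subset is a \emph{basis} of $M(X_P)$ and hence of $M(X_Q)$ after including $B$; and (ii) that $X_P^{\{2\}}$ is defective, i.e.\ $\dim X_P^{\{2\}} < \min\{6, 8\} = 6$.

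For part (i) I would exhibit an explicit partition of the eight vertices/lattice points of $P$ into two sets of four, each of which is affinely independent (so the corresponding monomials are algebraically independent on the torus, giving a rank-$3$ coordinate projection; since $\dim X_P = 3$, any affinely independent four-subset is a basis). For part (ii) — the defectivity of $X_P^{\{2\}}$ — I would compute the dimension via Terracini's lemma, or equivalently count the lattice points reachable by the tangent spaces at two general torus points. Concretely, one identifies the affine span of $2P$ (the Minkowski sum / the lattice points contributing to the tangent spaces of the secant) and shows the relevant tangent space at a generic point of the secant has rank at most $5$ rather than the expected $6$; this is exactly the kind of sparse-polynomial tangent-space computation in the style of~\cite{lmr} and~\cite{draisma2008tropical}. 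The specific point set $P$ was engineered precisely so that this secant defect occurs while the two halves remain bases.

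The main obstacle I expect is part (ii): verifying that $X_P^{\{2\}}$ is genuinely defective. Unlike the surface case of Theorem~\ref{thm: surface}, where defectivity followed transparently from recognizing the quadratic Veronese, here there is no familiar classical variety to invoke, so the defect must be certified by a direct tangent-space (Jacobian/Terracini) computation specific to the coordinates of $P$. Confirming that the expected dimension $6$ drops — rather than merely bounding it — requires showing the two generic tangent spaces fail to span, which is the delicate step; the partition in part (i), by contrast, is a routine check of affine independence. Once both parts are in hand, Theorem~\ref{thm: union equality} immediately yields $M(X_Q^{\{2\}}) \neq 2M(X_Q)$.
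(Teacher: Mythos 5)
Your proposal follows essentially the same route as the paper: reduce to the case $P\subseteq Q$ by a lattice equivalence and translation, take $B$ to be the eight lattice points of $P$, split them into two affinely independent (hence linearly independent after homogenizing) four-element sets to certify that $B$ is a basis of $2M(X_Q)$, observe that $\overline{\pi_B(X_Q)} = X_P$ is $2$-defective, and invoke Theorem~\ref{thm: union equality}. The one step you correctly flag as delicate --- certifying the $2$-defectivity of $X_P$ --- is handled in the paper not by a fresh tangent-space computation but by citing Remark~6.2 of \cite{lmr}, and the explicit partition is the four vertices $\{(0,0,1),(1,0,2),(0,2,1),(2,2,1)\}$ versus the remaining four lattice points $\{(1,1,0),(1,1,1),(1,2,1),(0,1,1)\}$, verified by two $4\times 4$ determinants.
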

\begin{proof}
After applying an element of $GL(3,\mathbb{Z})$ and a translation,
we may assume that $Q\subseteq \mathbb{R}^3$ is a lattice polytope
containing $P$.  Remark 6.2 in \cite{lmr} shows that $X_P \subseteq
\mathbb{P}^7$ is 2-defective and that \[Q \cap \mathbb{Z}^3
  \supseteq P \cap \mathbb{Z}^3 = \{(0,0,1), (1,0,2),(0,2,1),(2,2,1),
(1,1,0), (1,1,1), (1,2,1), (0,1,1)\}.\]  Each lattice point in $Q
\cap \mathbb{Z}^3$ corresponds to a coordinate in the ambient
space, and assume that these eight lattice points correspond to the
first eight coordinates.  Therefore, partitioning $[8]$ into two
bases of $M(X_Q)$ corresponds to partitioning the columns of
\[
  \begin{pmatrix}
    1 & 1 & 1 & 1 & 1 & 1 & 1 & 1\\
    0 & 1 & 0 & 2 & 1 & 1 & 1 & 0\\
    0 & 0 & 2 & 2 & 1 & 1 & 2 & 1\\
    1 & 2 & 1 & 1 & 0 & 1 & 1 & 1
  \end{pmatrix}
\]
into two sets of 4 linearly independent columns.  Since the
determinants of the first four and last four columns are nonzero,
we see that $\{1,2,3,4\}$ and $\{5,6,7,8\}$ are bases of $M(X_Q)$
and that the projection of $X_Q$ to the $\mathbb{P}^7$
corresponding to these coordinates is $X_P$, which is defective.
Therefore, by Theorem~\ref{thm: union equality},  $M(X_Q^{\{2\}})
\neq 2M(X_Q).$
\end{proof}

In Theorems~\ref{thm: surface} and~\ref{thm: threefold}, the
Terracini union property failed because of projection to a normal
toric variety.  However, as Example~\ref{ex: nonnormal} shows, the
Terracini union property can also fail because of projections to
non-normal toric varieties.
\begin{example}\label{ex: nonnormal}
Let $X \subseteq \mathbb{P}^{11}$ be the embedding of $\mathbb{P}^1 \times
\mathbb{P}^2$ via a monomial basis for
$H^0(\mathbb{P}^1 \times
\mathbb{P}^2,O_{\mathbb{P}^1 \times \mathbb{P}^2}(1,2)).$  The
exponent vectors of these monomials are the columns of integer matrix
(where we delete the row of 1's)
\[
  \left(
    \begin{array}{cccccccccccc}
      1 & 1 & 1 & 1 & 1 & 1 & 1 & 1 & 1 & 1 & 1 & 1\\
      0 & 0 & 0 & 0 & 0 & 0 & 1 & 1 & 1 & 1 & 1 & 1\\
      0 & 1 & 0 & 2 & 1 & 0 & 0 & 1 & 0 & 2 & 1 & 0\\
      0 & 0 & 1 & 0 & 1 & 2 & 0 & 0 & 1 & 0 & 1 & 2
  \end{array}\right).
\]

The matroid $M(X^{\{2\}})$ fails to be a Terracini union, and one of
the missing bases corresponds to
the projection to the variety given by the submatrix
\[
  \begin{pmatrix}
    1 & 1 & 1 & 1 & 1 & 1 & 1 & 1 \\
    0 & 0 & 0 & 0 & 0 & 1 & 1 & 1 \\
    0 & 0 & 2 & 1 & 0 & 0 & 1 & 0 \\
    0 & 1 & 0 & 1 & 2 & 0 & 0 & 1
  \end{pmatrix}
\]
is defective.  We see that the variety corresponding to the
projection fails to be normal because we have lattice points $(0,0,0)$ and
$(0,2,0)$ but not $(0,1,0)$.
\exqed
\end{example}

\subsubsection{Comparison with tangent-space methods}\label{subsec:Laface}

While our results emphasize projections and matroid unions,
other approaches such as \cite{lmr} use tangent space computations
via Terracini's lemma. To illustrate the contrast, we conclude this
section with a worked example.

The work of \cite{lmr} develops a method to test non-defectiveness of
secant varieties
of toric varieties using Terracini's lemma and tangent space
computations. Their approach
relies on carefully chosen one-parameter subgroups of the torus,
which provide explicit
tangent vectors. Verifying non-defectiveness then reduces to
computing the rank of a matrix, which in turn reduces to finding a
nonzero minor of the appropriate size.  This can be translated into a
combinatorial condition
on lattice points in the defining polytope: one must select disjoint
simplices and choose
subgroups so that certain linear functions are maximized on distinct
simplices. (This is also closely related to the approach taken in
\cite{draisma2008tropical}.) This
ensures that there exists a  unique highest-degree term in the minor
expansion, proving
non-vanishing and hence the desired rank condition.

The following example, we apply Proposition~3.3 of \cite{lmr},
illustrating both their
method and how it compares with the matroid-theoretic perspective
developed here.

\begin{example}[Comparison with \cite{lmr}]\label{ex:Laface}
Let $P$ be a $3\times 2$ lattice rectangle with associated embedding $\varphi_P:\mathbb{P}^1 \times \mathbb{P}^1
\hookrightarrow \mathbb{P}^{11}$ and let $X$ be the image of $\varphi_P$.
Each $v = (v_1,v_2) \in \mathbb{Z}^2$ 
gives rise to a 1-parameter subgroup with elements
$\Gamma_v(a) = (a^{v_1}, a^{v_2})$
for $a \in \mathbb{C}^*$. Following \cite{lmr}, we choose points of the form $\Gamma_v(a) \in (\mathbb{C}^*)^2$ and produce a
$3 \times 12$ matrix whose rows
span the tangent space at $\varphi_P(\Gamma_v(a))$. To study the second
secant variety, we compute two
such matrices and stack them to form a $6\times 12$ matrix.

Proposition~3.3 of \cite{lmr} selects the two simplices below in $P$:
\[
  \begin{tikzpicture}[style = thick, scale = .6]
    \tikzstyle{dot}=[circle,fill=black,inner sep=2 pt];
    \node  [dot] (1) at (0,0) {}; \node  [dot] (2) at (1,0) {};
    \node  [dot] (3) at (2,0) {}; \node  [dot] (4) at (0,1) {};
    \node  [dot] (5) at (1,1) {}; \node  [dot] (6) at (2,1) {};
    \node  [dot] (7) at (0,2) {}; \node  [dot] (8) at (1,2) {};
    \node  [dot] (9) at (2,2) {}; \node  [dot] (10) at (3,0) {};
    \node  [dot] (11) at (3,1) {}; \node  [dot] (12) at (3,2) {};
    \draw (1) --(2) -- (4) -- (1);
    \node at (-1,2) {$\Delta_1$};
    \node at (-1,0) {$\Delta_2$};
    \draw (7) -- (5) -- (8) -- (7);
  \end{tikzpicture}.
\]
The vectors $v_1 = (2,1)$ and $v_2 = (1,1)$ separate $\Delta_1$ and
$\Delta_2$ in
$\Delta = \Delta_1 \cup \Delta_2$. From these we obtain the
one-parameter subgroups
$\Gamma_{(2,1)}(a) = (a^2,a)$ and $\Gamma_{(1,1)}(a) = (a,a)$.
Evaluating the differential
of the torus embedding at $\Gamma_{(2,1)}(2)$ gives
\[
  \left(\!
    \begin{array}{cccccccccccc}
      0&1&8&48&0&2&16&96&0&4&32&192\\
      0&0&0&0&1&4&16&64&4&16&64&256\\
      1&4&16&64&2&8&32&128&4&16&64&256
  \end{array}\!\right),
\]
whose rows form a basis for the tangent space at $\varphi_P(\Gamma_{(2,1)}(2))$. Similarly, the tangent space at
$\varphi_P(\Gamma_{(1,1)}(2))$ is spanned by
\[
  \left(\!
    \begin{array}{cccccccccccc}
      0&1&4&12&0&2&8&24&0&4&16&48\\
      0&0&0&0&1&2&4&8&4&8&16&32\\
      1&2&4&8&2&4&8&16&4&8&16&32
  \end{array}\!\right).
\]

Concatenating these gives a matrix $M$ which has rank $6$.
Terracini's lemma then
implies that the two chosen points are generic enough for their
tangent spaces to span
the tangent space of the secant variety at
$\varphi_P(\Gamma_{(2,1)}(2))+\varphi_P(\Gamma_{(1,1)}(2))$.

However, this choice of $v_1,v_2,a$ is not generic enough to make the
linear matroid on
the rows of $M$ coincide with $M(X^{\{2\}})$. The linear matroid
has $486$ bases, while $M(X^{\{2\}})$ has $916$. Choosing instead $v_1=(5,2)$,
$v_2=(1,1)$, and $a=3$ produces a linear matroid with $916$ bases,
agreeing with $M(X^{\{2\}})$.

In summary, the method of \cite{lmr} computes the rank of $M$ and
hence the dimension
of $X^{\{2\}}$. Since the dimension of a variety equals the rank of
its algebraic matroid,
their method recovers the rank of $M(X^{\{2\}})$. Our framework,
however, also detects when $M(X^{\{2\}})$ fails to be a Terracini
union. For example, because $P$ contains the simplex
below, \[
  \begin{tikzpicture}[style = thick, scale = .6]
    \tikzstyle{dot}=[circle,fill=black,inner sep=2 pt];
    \node  [dot] (1) at (0,0) {}; \node  [dot] (2) at (1,0) {};
    \node  [dot] (3) at (2,0) {}; \node  [dot] (4) at (0,1) {};
    \node  [dot] (5) at (1,1) {}; \node  [dot] (6) at (2,1) {};
    \node  [dot] (7) at (0,2) {}; \node  [dot] (8) at (1,2) {};
    \node  [dot] (9) at (2,2) {}; \node  [dot] (10) at (3,0) {};
    \node  [dot] (11) at (3,1) {}; \node  [dot] (12) at (3,2) {};
    \draw (1) --(3) -- (7) -- (1);
  \end{tikzpicture}
\]
$M(X^{\{2\}})$ is not a Terracini union by Theorem~\ref{thm: union equality}.
\exqed
\end{example}

\section{Open questions}
In this section we ask two questions for further study. The first is
motivated by
Example~\ref{ex: coloop extension}, which showed that $M(X^{\{2\}})$
may be a Terracini union even though $X$ is defective.  However, in
the example given, $X$ is defective because it is a cone.  In terms
of the combinatorics, $M(X)$ and $M(X^{\{2\}})$ have the same coloop,
and $M(X^{\{2\}})$ is a Terracini union.
\begin{question}
Let $X\subseteq \mathbb{A}^n_\kk$ be an irreducible affine cone and
suppose that $X$ is
defective and that $M(X^{\{2\}})$ is a Terracini union.  Must $M(X)$
contain a coloop or loop?
\end{question}
To state the second question we observe that the Terracini union
property suggests a more
general notion of a {\em Terracini matroid}.
\begin{definition}
Let $X \subseteq \mathbb{A}^n$ be a variety and let $M(X)$ be its
algebraic matroid.
We say that $M(X)$ is a \emph{k-fold Terracini matroid} if there exist varieties
$X_1, \ldots, X_k \subseteq \mathbb{A}^N$, with $X_i\neq X$, such that
\begin{equation}\label{eq:terracini}
  X = X_1 + \cdots +X_k
  \quad \text{and} \quad
  M(X) = M(X_1) \,\vee\, \cdots \,\vee\, M(X_k).
\end{equation}
\end{definition}

The second asks about the \emph{rigidity} of a decomposition of an
algebraic matroid as a Terracini
union.
\begin{question}
Let $X\subseteq \mathbb{A}^n_\kk$ be an irreducible affine cone that
is not a linear space, and suppose that $M(X)$ is a $k$-fold
Terracini matroid.  Can $M(X)$ be realized as a Terracini union in
more than one way?
\end{question}

\section*{Acknowledgments} This work originated at the NII Shonan Meeting
“Theory and Algorithms in Graph Rigidity and Algebraic Statistics” in 2024.
We are also grateful to the Isaac Newton Institute for Mathematical Sciences
for hosting us in December 2024 through the INI Mathematical
Retreats program and ICERM (via NSF Grant No. DMS-1929284) for its
hospitality during the semester program on ``Geometry of packings,
materials and rigid
frameworks''. 
We used Macaulay 2 packages \cite{PhylogeneticTreesSource},
\cite{QuasidegreesSource}, \cite{MatroidsSource}, and
\cite{algebraicMatroids} (see the corresponding articles
\cite{PhylogeneticTreesArticle}, \cite{QuasidegreesArticle}, and
\cite{MatroidsArticle})
to compute examples. F.M. was partially supported by the
FWO Odysseus grant G0F5921N and iBOF/23/064 grant from KU Leuven.
L.T. was partially supported by UK Research and Innovation (grant
number UKRI1112),
under the EPSRC Mathematical Sciences Small Grant scheme.
We also wish to thank Bernd Sturmfels, Peter Vermeire and Dario Antolini 
for helpful conversations.

\defbibheading{bibliography}{%
\section*{References}%
\markboth{}{}%
\thispagestyle{plain}%
}

\printbibliography

\clearpage
\markboth{}{}

\appendix

\section{Conormal Terracini}\label{sec: terracini}
Although we do not use it to prove our theorems, the classical Terracini lemma 
has the following dual form.  Like the primal version, it follows directly 
from Sard's theorem, once we have a global coordinate system.
\begin{lemma}\label{lem: terracini}
Let $\kk$ be algebraically closed and of characteristic zero, and let 
$X_1, \ldots, X_s$ be irreducible affine cones in $\kk^n$.  Denote by $X$ the 
embedded join $X_1 + \cdots + X_s$. Then there 
is an open subset $U\subseteq X$ such that, for all 
$x\in U$, if $x = x_1 + \cdots + x_s$, with $x_i\in X_i$, then 
\[
    N^*_x X = \bigcap_{i=1}^s N^*_{x_i} X_i,
\]
where the right-hand side takes the intersection as embedded conormal spaces 
in $(\kk^n)^*$.
\end{lemma}
\begin{proof}
Let $\sigma : X_1 \times \cdots \times X_s \to X$ be the map that adds points.  The set of 
points $(x_1, \ldots, x_s)$ that map to a regular value of $\sigma$  and for which 
each $x_i$ is a regular value of the projection $p_i$ to the $i$th factor is 
open and dense in $X$.  Consider the following diagram:

\begin{center}
\begin{tikzcd}
0 \arrow[r] & N^*_x X \arrow[r] & (\kk^n)^* \arrow[r] \arrow[dr] & T^*_x X \arrow[d, hook, "(d\sigma)^*"] \\
& & & \bigoplus_i T^*_{x_i} X_i
\end{tikzcd}
\end{center}

Because $x\in X$ is smooth, the top row is from the conormal 
exact sequence at $X$.  The injection is from 
Lemma~\ref{lem: product cotangent space} and the fact that $x$ is a 
regular value of $\sigma$.  The diagonal map is the composition, injectivity 
of the right vertical implies that it has the same kernel as the 
surjection to $T^*_x X$, which is $N^*_x X$.  By definition, the 
composite has kernel $\bigcap_{i=1}^s N^*_{x_i} X_i$ (every component 
needs to vanish in the image), so we are done.
\end{proof}

\section{Loops and coloops}\label{sec: loops and coloops}

In  Example~\ref{ex: coloop extension} we saw how coloops can affect
the combinatorics of the algebraic matroid of a join.
In this appendix we give state and prove some basic results, likely
known to experts but not in the literature,
illustrating how both loops and coloops arise in algebraic matroids.
\begin{lemma}\label{lem: geometric coloop}
Let $X\subseteq \mathbb{A}^{N}$ be an irreducible affine cone.  Then
$z_i\in Z$ is a coloop in $M(X)$ if
and only if $X$ is the cone over the point in $\mathbb{A}^{N}$
corresponding to the elementary vector $e_i \in \mathbb{A}^N$.
Algebraically,  $z_i\in Z$ is a coloop in $M(X)$ if
and only if there exist
homogeneous generators $g_1, \ldots, g_t\in S$ for $\II(X)$ that do not contain
$z_i$ in their support.
\end{lemma}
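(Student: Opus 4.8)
The plan is to prove the two equivalences by routing both through a single intermediate geometric condition: that $X$ is a \emph{cylinder in the $e_i$-direction}, meaning that, writing $Z' = Z\setminus\{z_i\}$, we have $X = \overline{\pi_{Z'}(X)}\times \mathbb{A}^1_{z_i}$. This cylinder is exactly the affine cone of a projective cone with vertex $[e_i]$, so identifying it with ``the cone over the point $e_i$'' of the statement is immediate. I would therefore establish the chain (coloop) $\iff$ (cylinder) $\iff$ (generators avoiding $z_i$).

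First I would recall the standard matroid fact that $z_i$ is a coloop of a matroid iff deleting it drops the rank, i.e.\ $\rank(M(X)\setminus z_i) = \rank M(X) - 1$. Since the restriction $M(X)|_{Z'}$ has independent sets $\{E\subseteq Z' : \II(X)\cap\kk[E] = \langle 0\rangle\}$, and the Closure Theorem of \cite{clo} gives $\II(\overline{\pi_{Z'}(X)}) = \II(X)\cap\kk[Z']$, this restriction is precisely $M(\overline{\pi_{Z'}(X)})$. Using $\rank M(\,\cdot\,) = \dim(\,\cdot\,)$, the coloop condition translates to $\dim \overline{\pi_{Z'}(X)} = \dim X - 1$.

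Next I would prove (coloop) $\iff$ (cylinder). The easy direction is that a cylinder projects along $\pi_{Z'}$ with one-dimensional generic fiber, dropping dimension by exactly one. For the converse, the fiber-dimension theorem gives that the generic fiber of $\pi_{Z'}\colon X \to X_0 := \overline{\pi_{Z'}(X)}$ has dimension $\dim X - \dim X_0 = 1$; since each fiber lies inside a line parallel to $e_i$, a one-dimensional fiber must be the entire line, so $\pi_{Z'}^{-1}(U_0) = U_0\times\mathbb{A}^1$ over a dense open $U_0\subseteq X_0$. As $X$ is irreducible this preimage is dense in $X$, and taking Zariski closures yields $X = X_0\times\mathbb{A}^1$, which is the desired cylinder; it is invariant under $x\mapsto x+\lambda e_i$ and contains $e_i$, matching the cone-over-$[e_i]$ description.

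Finally I would prove (cylinder) $\iff$ (homogeneous generators avoiding $z_i$). Set $J = \II(X)\cap\kk[Z']$, a homogeneous prime, and $A = \kk[Z']/J$. A cylinder has $\II(X) = J\kk[Z]$, and conversely generators lying in $\kk[Z']$ force $V(\II(X))$ to be a cylinder. The main technical point—and the step I expect to be the real obstacle—is verifying that the coloop condition actually collapses $\II(X)$ onto the extension $J\kk[Z]$. Here I would note that coloop means $\bar z_i$ is transcendental over $\mathrm{Frac}(A)$, so $\kk[Z]/\II(X) = A[\bar z_i]$ is a genuine polynomial ring in $\bar z_i$ over $A$; hence the natural surjection $A[z_i] = \kk[Z]/J\kk[Z] \twoheadrightarrow \kk[Z]/\II(X)$ is an isomorphism of polynomial rings, forcing $\II(X) = J\kk[Z]$. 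Homogeneity of $\II(X)$ then lets me select homogeneous generators of $J$, none of which involve $z_i$. The care needed is purely in the commutative-algebra bookkeeping: ensuring the transcendence of $\bar z_i$ is used correctly to conclude the variable is not killed and the coefficient ring injects, so that the surjection above is forced to be an isomorphism.
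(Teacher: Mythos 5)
Your proposal is correct, but it takes a genuinely different route from the paper's. The paper proves the algebraic equivalence head-on: for ``homogeneous generators avoiding $z_i$ $\Rightarrow$ coloop'' it runs a circuit-polynomial argument (write the circuit polynomial $f=\sum f_jg_j$, split off the part divisible by $z_i$, and use minimality of the circuit's support together with primality of $\II(X)$ to contradict irreducibility of $f$); for the converse it shows that $P'S$ and $P=\II(X)$ are primes of the same dimension with $P'S\subseteq P$, hence equal, where $P'=P\cap \kk[Z']$ is your $J$. You instead route everything through the geometric form of the statement, characterizing the coloop condition as ``$X$ is the cylinder $\overline{\pi_{Z'}(X)}\times\mathbb{A}^1$'' via restriction-of-matroid $=$ matroid-of-projection, $\rank = \dim$, and the fiber-dimension theorem; and your replacement for the paper's dimension count of primes is the observation that a coloop makes $\bar z_i$ transcendental over $\operatorname{Frac}(A)$, so $\kk[Z]/\II(X)\cong A[z_i]$ is a polynomial ring and the surjection $\kk[Z]/J\kk[Z]\twoheadrightarrow \kk[Z]/\II(X)$ is forced to be injective. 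Both arguments are sound. Yours has the advantage of actually proving the geometric assertion of the lemma (which the paper's proof only handles implicitly, saying ``we prove the algebraic statement'') and of reducing the key collapse $\II(X)=J\kk[Z]$ to a one-line transcendence argument; the paper's version is purely ideal-theoretic and avoids the fiber-dimension theorem, which in your step (coloop $\Rightarrow$ cylinder) tacitly requires $\kk$ algebraically closed, or at least extra care over non-closed fields. One remark: once you know that $z_i$ is a coloop exactly when $\bar z_i$ is transcendental over $A$, both directions of the algebraic equivalence follow from your polynomial-ring isomorphism alone, so the geometric detour is needed only for the cone-over-$e_i$ statement, not for the equivalence with sparse generators.
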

\begin{proof}
We prove the algebraic statement.  Let $P = \II(X) \subseteq S = \kk[Z]$.  Suppose, first,
that none of the $g_j$ are supported on $z_i$.  We
claim that $z_i\notin P$.  Indeed, any linear form in $P$ is a
$\kk$-linear combination of linear forms in the homogeneous
generating set $g_i$.
Since we have assumed none of these are supported on $z_i$, no
linear form with $z_i$ in its support can be in $P$.

Towards a contradiction, we now assume  that $i$ is contained in a
circuit $C$.  Let $f$ be the circuit polynomial of $C$.
Since $f \in P$, $f = f_1g_1+ \cdots +f_tg_t$ for some $f_j \in S.$
Write $f_j = z_iq_j+r_j$ where $r_j$
is not divisible by $z_i$.
Define $h = z_iq_1g_1+\cdots + z_iq_tg_t$ and $h' = r_1g_1+\cdots +
r_tg_t$ so that $f = h + h'$.
By construction, $h'\in P$, and no term in $h'$ is divisible by
$z_i$.  Because
every term of $h$ is divisible by $z_i$, there is no cancellation
between $h$ and $h'$, so the support of $h'$ is properly
contained in that of $f$.  Since $f$ is a circuit polynomial, the
minimality of its support
implies that $h' = 0$, and, hence that $f = h$.  We now have
$f = z_i f'$ for some $f'\in S$.  Because  $z_i\notin P$ and $P$
is prime, $f'\in P$.  Since $f$ is a circuit polynomial,it is
nonzero so $f'$ must be  as well.  We are
now at the desired contradiction: as a circuit polynomial, $f$ is
irreducible, but we have shown that
$f$ is reducible.  We conclude that our
assumption is false; i.e., there is no circuit in $M(X)$ supported on $i$.
Since there is no circuit in $M(X)$ supported on $i$, $i$ is a
coloop in $M(X)$.

Now we assume that $i$ is a coloop in $M(X)$.
Let $S'   =\kk[Z\setminus \{z_i\}]$ and let $P'  = P\cap S'$.
By construction, $P'$ is prime.  The hypothesis that $i$ is a
coloop implies that the bases of $M(P')$
are exactly the bases of $M(X)$ with $i$ removed.  The  dimension
of $P'$ is equal to the rank of its
algebraic matroid, and so we conclude that $\dim S'/P' = \dim S/P -
1$.  The ideal
generated by $P'$ in $S$, $P'S$, is a prime ideal of $S$ contained
in $P$.  Since $P'S$ defines a cone, we see that $\dim S/(P'S) =
\dim S'/P' +1 = \dim S/P.$  Since $P'S \subseteq P$ and both are
prime ideals defining varieties of the same dimension, they must be
equal. Therefore, we see that $P$ can be generated by elements not
containing $z_i$ in their support.
\end{proof}
Loops are a bit easier.
\begin{lemma}\label{lem: geometric loops}
Let $X\subseteq \mathbb{A}^{N}$ be an irreducible affine cone.  Then
$i$ is a loop in $M(X)$ if and only if $X$ is contained in the
hyperplane $\mathcal{V}(z_i)$.  Algebraically, $i$ is a loop in
$M(X)$ if and only if $z_i \in \II(X)$.
\end{lemma}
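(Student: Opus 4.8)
The plan is to establish the algebraic characterization first, since the geometric statement then follows by unwinding the definition of the vanishing ideal. Recall that $i$ is a loop in $M(X)$ exactly when the singleton $\{z_i\}$ is dependent, which by Definition~\ref{def: algebraic matroid} means $\II(X)\cap\kk[z_i]\neq\langle 0\rangle$. One implication is immediate: if $z_i\in\II(X)$, then $z_i$ is itself a nonzero element of $\II(X)\cap\kk[z_i]$, so $\{z_i\}$ is dependent and $i$ is a loop.

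For the converse, suppose $i$ is a loop and choose a nonzero $f\in\II(X)\cap\kk[z_i]$. The key step is to upgrade this arbitrary univariate relation to the clean statement $z_i\in\II(X)$, and I would do so in two moves. First, since $\II(X)$ is homogeneous, each graded component of $f$ again lies in $\II(X)$; as the only homogeneous polynomials in the single variable $z_i$ are scalar multiples of the powers $z_i^d$, and $\II(X)$ contains no nonzero constant (because $X$ is irreducible, hence nonempty), some $z_i^d$ with $d\geq 1$ lies in $\II(X)$. Second, because $X$ is irreducible its ideal $\II(X)$ is prime, hence radical, so $z_i^d\in\II(X)$ forces $z_i\in\II(X)$, as desired.

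Finally, the geometric equivalence is a direct translation: $z_i\in\II(X)$ says precisely that the coordinate function $z_i$ vanishes on $X$, i.e.\ $X\subseteq\mathcal{V}(z_i)$, and the reverse implication is exactly the statement that $\II(X)$ is the full vanishing ideal of $X$. I do not expect a genuine obstacle in this lemma, consistent with the remark that loops are easier than coloops; the only point requiring a moment's care is the homogeneity-plus-primality reduction in the converse, which is what turns a single univariate relation into the assertion $z_i\in\II(X)$ rather than merely $z_i^d\in\II(X)$.
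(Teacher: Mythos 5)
Your proof is correct and follows essentially the same route as the paper's: both arguments use homogeneity of $\II(X)$ to reduce to a power $z_i^d\in\II(X)$ and then primality (you via radicality, the paper via irreducibility of the circuit polynomial) to conclude $z_i\in\II(X)$. Your version is marginally more self-contained since it avoids invoking properties of circuit polynomials, but the substance is identical.
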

\begin{proof}
The variety $X$ is contained in $\mathcal{V}(z_i)$ if and only
if $z_i\in \II(X)$.  We claim that this latter statement is equivalent
to $i$ being a loop in $M(X)$.

Suppose that $i$ is a loop in $M(X)$.  The circuit polynomial of
the circuit $\{i\}$ is
homogeneous, as it is in $\II(X)$ and is supported only on $z_i$, so it
must be of the form $z_i^n$.
Since a circuit polynomial is irreducible, it must be $z_i$.  Conversely, if
$z_i\in \II(X)$, then it is a circuit polynomial for $\{i\}$ in
$M(X)$, since it is irreducible, has minimal support,
and is supported only on $z_i$.
\end{proof}
We pause to record a fact we do not need, but might be interesting.
While the matroid $M(X^*)$ of the
dual variety of a projective variety is not necessarily the dual
matroid  $M(X)^*$.  However, loops and coloops
do exchange under projective duality on $X$.
\begin{lemma}\label{lem: dual variety loops coloops}
Let $X\subseteq \mathbb{A}^{N}$ be an irreducible homogeneous variety.  Then
$i\in [N]$ is a coloop in $M(X)$ if and only if
$i$ is a loop in $M(X^*)$.
\end{lemma}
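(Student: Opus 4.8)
The plan is to reduce both conditions --- $i$ being a coloop in $M(X)$ and $i$ being a loop in $M(X^*)$ --- to the single geometric condition that the standard basis vector $e_i$ lies in the tangent space $T_xX$ at a generic (smooth) point $x\in X$. Once I show that each side is equivalent to this common condition, the biconditional follows at once.

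For the coloop side I would invoke Theorem~\ref{thm: ingleton}: at a generic $x\in X$, the matroid $M(X)$ is isomorphic to the $\kk$-linear matroid of the coordinate differentials ${\rm d}z_1,\ldots,{\rm d}z_N$ in the cotangent space $T^*_xX$. In any linear matroid an element is a coloop exactly when it is not in the span of the remaining vectors, so $i$ is a coloop iff ${\rm d}z_i\notin\operatorname{span}\{{\rm d}z_j:j\neq i\}$. Passing to annihilators inside $T_xX$, this span has annihilator $\{v\in T_xX : v_j=0 \text{ for all } j\neq i\} = T_xX\cap\langle e_i\rangle$, which is nonzero precisely when $e_i\in T_xX$. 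Hence $i$ is a coloop in $M(X)$ iff $e_i\in T_xX$ for generic $x$. (This is consistent with, but more convenient than, the algebraic criterion of Lemma~\ref{lem: geometric coloop}.)

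For the loop side I would use that $X^*$ is, by definition, the closure of the image under the second projection $\mathrm{pr}_2$ of the conormal variety $\mathcal{Z}=\overline{\{(x,w):x\in X_{\mathrm{sm}},\ w\in (T_xX)^\perp\}}$, where I identify the dual space with $\mathbb{A}^N$ via the standard pairing so that the $i$-th dual coordinate is $w\mapsto\langle w,e_i\rangle=w_i$. By Lemma~\ref{lem: geometric loops} applied to $X^*$, the index $i$ is a loop in $M(X^*)$ iff $w_i\in\II(X^*)$, i.e.\ iff $w_i$ vanishes identically on $X^*$. Since the open conormal locus is irreducible (the conormal bundle over $X_{\mathrm{sm}}$) and dominates $X^*$ under $\mathrm{pr}_2$, this vanishing is equivalent to $w_i=0$ at the generic point of $\mathcal{Z}$, hence to: for generic $x$, every $w\in (T_xX)^\perp$ satisfies $w_i=\langle w,e_i\rangle=0$. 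By the double-annihilator identity $((T_xX)^\perp)^\perp=T_xX$ in finite dimensions, this says exactly $e_i\in T_xX$. Thus $i$ is a loop in $M(X^*)$ iff $e_i\in T_xX$ for generic $x$, matching the coloop condition and finishing the argument.

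The main obstacle is the bookkeeping around the conormal variety and the two nondegenerate pairings. I must pin down the identification of the dual coordinates $w_i$ with pairing against $e_i$, verify that $\mathrm{pr}_2$ carries the generic point of the irreducible conormal locus to the generic point of $X^*$ (so that vanishing on $X^*$ can be tested at the generic conormal point, landing on the \emph{same} genericity of $x$ used in the coloop computation), and keep the two perp-operations consistent. Each of these steps is standard, but ensuring that the annihilator used on the coloop side and the double annihilator used on the loop side are taken with respect to compatible pairings is where the care is needed.
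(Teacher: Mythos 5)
Your proof is correct, but it takes a genuinely different route from the paper's. The paper's argument is a three-step citation: Lemma~\ref{lem: geometric coloop} ($z_i$ is a coloop iff $X$ is a cone over $e_i$), then \cite[Theorem 5.3]{fulton2013intersection} ($X$ is a cone over a point $p$ iff $X^*$ lies in the hyperplane dual to $p$), then Lemma~\ref{lem: geometric loops} applied to $X^*$. You instead route both conditions through the single generic-tangent-space criterion $e_i\in T_xX$: on the coloop side via Theorem~\ref{thm: ingleton} and the annihilator of $\operatorname{span}\{{\rm d}z_j:j\neq i\}$, and on the loop side via the conormal variety and the double-annihilator identity. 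In effect you are reproving the relevant instance of the Fulton biduality statement rather than citing it, which makes your argument more self-contained and makes the mechanism (a fixed tangent direction along $X$ versus a linear constraint on all conormal covectors) transparent; the care you flag about compatible pairings and about transferring genericity from the conormal locus to $X^*$ is exactly right, and is handled correctly since the locus $\{x\in X_{\mathrm{sm}}: e_i\in T_xX\}$ is closed in $X_{\mathrm{sm}}$, so the two notions of ``generic $x$'' coincide. The trade-offs: the paper's route stays at the level of ideals and cones (Lemmas~\ref{lem: geometric coloop} and \ref{lem: geometric loops} need no hypotheses on $\kk$ beyond what is already standing), whereas your route inherits the $\operatorname{char}\kk=0$ and $\kk=\overline{\kk}$ hypotheses of Theorem~\ref{thm: ingleton}; on the other hand yours avoids an external black box and would adapt more readily to settings where one only has tangent-space data.
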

\begin{proof}
It is shown in \cite[Theorem 5.3]{fulton2013intersection} that an
irreducible projective variety $X$ is a cone over a point $p$
if and only if its dual variety $X^*$ is contained in the
hyperplane projectively dual to $p$.  Applying Lemmas~\ref{lem: geometric coloop} and~\ref{lem: geometric loops}
completes the proof.
\end{proof}
Now we can see that an affine cone $X$ with a coloop in its matroid
is defective whenever its second secant does not fill $\mathbb{A}^{N}$.
\begin{lemma}\label{lem: coloop defective}
Let $X\subseteq \mathbb{A}^{N}$ be an irreducible homogeneous variety.  If
$M(X)$ has a coloop, then either $X^{\{2\}}= \mathbb{A}^{N}$, or
$X$ is defective.
\end{lemma}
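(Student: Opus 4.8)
We want to show that if $M(X)$ has a coloop, then either $X^{\{2\}} = \mathbb{A}^N$ or $X$ is defective. The plan is to use the structural characterization of coloops from Lemma~\ref{lem: geometric coloop}. Suppose $z_i$ is a coloop in $M(X)$. By that lemma, the prime ideal $\II(X)$ admits a homogeneous generating set none of whose elements are supported on $z_i$; equivalently, in suitable terms, the variable $z_i$ plays a ``free'' role and $X$ has the structure of a cone in the $z_i$-direction. The key consequence I would extract is that $X = X' + L_i$, where $L_i = \mathcal{V}(z_j : j \neq i)$ is the coordinate line spanned by $e_i$, and $X'$ is the section $X \cap \mathcal{V}(z_i)$ viewed inside the hyperplane. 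In other words, $z_i$ being a coloop makes $X$ decompose as an embedded join with a line factor.

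\textbf{Main argument.} Granting this decomposition, I would compute dimensions. Since $e_i$ contributes an independent direction not already in the span of the relations, we have $\dim X = \dim X' + 1$, where $X' \subseteq \mathcal{V}(z_i) \cong \mathbb{A}^{N-1}$. Now consider the second secant: because $X$ contains the line $L_i$ in its ruling, $X^{\{2\}} = X + X$ already contains $L_i + L_i = L_i$, but more usefully $X^{\{2\}} = (X' + L_i)^{\{2\}} = (X')^{\{2\}} + L_i$. The crucial point is that adding the line $L_i$ contributes at most one to the dimension of the secant, not the full amount one might naively expect. Concretely, $\dim X^{\{2\}} \le \dim (X')^{\{2\}} + 1$. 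On the other hand, the expected dimension of $X^{\{2\}}$ is $\min\{2\dim X, N\} = \min\{2\dim X' + 2, N\}$. If $X^{\{2\}} \neq \mathbb{A}^N$, then $\dim X^{\{2\}} < N$, and combining with $\dim X^{\{2\}} \le \dim (X')^{\{2\}} + 1 \le (\text{expected dim of } (X')^{\{2\}}) + 1$, I would argue that the dimension falls short of $2\dim X' + 2$ by at least one, certifying defectivity of $X$.

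\textbf{Sharpening via the coloop count.} The cleanest way to make the dimension count rigorous is to use the matroid side directly. Since $z_i$ is a coloop of $M(X)$, every basis of $M(X)$ contains $z_i$. When we form $X^{\{2\}}$, the Sub-union Theorem (Theorem~\ref{thm: subunion}) gives $M(X^{\{2\}}) \preceq 2M(X)$; but in $2M(X)$ the element $z_i$, being a coloop in each copy of $M(X)$, can appear in a partition from only one copy, so $z_i$ remains a coloop in $2M(X)$ as well. Hence $z_i$ is \emph{not} doubled: $\rank(2M(X)) = 2\dim X - 1 < 2\dim X$. Therefore $\rank M(X^{\{2\}}) \le \rank(2M(X)) = 2\dim X - 1$. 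If $X^{\{2\}} \neq \mathbb{A}^N$ then $\rank M(X^{\{2\}}) = \dim X^{\{2\}} < N$, and combined with the bound $\dim X^{\{2\}} \le 2\dim X - 1 < \min\{2\dim X, N\}$ we conclude $X$ is defective.

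\textbf{Expected obstacle.} The delicate step is justifying that $z_i$ being a coloop of $M(X)$ forces $z_i$ to be a coloop of $2M(X)$, i.e.~that the coloop does not get ``absorbed'' into a larger independent set under matroid union. This should follow from the partition definition of union together with the fact that a coloop lies in every basis, but it requires a careful argument: one must show no independent set of $2M(X)$ of full size $2\dim X$ exists, equivalently that any attempt to include $z_i$ in the independent parts from both summands fails because each summand already forces $z_i$ to occupy its unique coloop slot. I expect this combinatorial verification, rather than the geometric dimension bookkeeping, to be the part demanding the most care.
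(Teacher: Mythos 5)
Your proposal is correct, and your \textbf{Main argument} paragraph is essentially the paper's proof: the paper first disposes of the case where the expected dimension of $X^{\{2\}}$ is $N$, then uses Lemma~\ref{lem: geometric coloop} to write $X$ as a cone with vertex $e_i$ over an irreducible $X'$ of dimension $r-1$, observes that $X^{\{2\}}$ is then a cone over $(X')^{\{2\}}$, and concludes $\dim X^{\{2\}} \le (2r-2)+1 = 2r-1 < 2r$. Your \textbf{Sharpening} paragraph is a genuinely different, purely matroid-theoretic route that the paper does not take, and it does work; but note two things. First, the ``expected obstacle'' you flag is not delicate at all: you do not need $z_i$ to remain a coloop of $2M(X)$, and your asserted equality $\rank\bigl(2M(X)\bigr) = 2\dim X - 1$ need not hold (e.g.\ if $M(X)$ has two coloops). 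All you need is the upper bound $\rank\bigl(2M(X)\bigr) \le 2r-1$, which is a one-line count: an independent set of $2M(X)$ is a union $I_1 \cup I_2$ of independent sets of $M(X)$, and if $|I_1 \cup I_2| = 2r$ then $I_1, I_2$ would be disjoint bases, impossible since every basis contains $z_i$. The Sub-union Theorem then transfers the bound to $\rank M(X^{\{2\}}) = \dim X^{\{2\}}$. Second, that route invokes Theorem~\ref{thm: subunion}, whose hypotheses ($\kk$ algebraically closed of characteristic zero) are not part of the lemma's statement; the paper's geometric argument avoids this. In short, your geometric paragraph already constitutes a complete proof matching the paper's, and your matroid argument is a valid alternative whose only real costs are the extra field hypotheses and the (unnecessary) coloop-preservation claim.
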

\begin{proof}
Suppose that $X$ has dimension $r$ and that $z_i$ is a coloop in
$M(X)$.  If the expected dimension of $X^{\{2\}}$ is
$N$, then $X$ is defective iff $X^{\{2\}}\neq \mathbb{A}^{N}$,
so we are done.  Hence, we may assume from now on that the
expected dimension is $2r$.

By Lemma~\ref{lem: geometric coloop}, $X$ is a cone with vertex $e_i$ over an
irreducible variety $X'\subseteq \mathbb{A}^{N-1}$ of dimension
$r-1$.  Hence, $X^{\{2\}}$ is a cone over $(X')^{\{2\}}$ with
vertex $e_i$.  The dimension of
$(X')^{\{2\}}$ is at most $2r - 2$, so $\dim X^{\{2\}} \leq 2r - 1
< 2r$, which shows that $X$
is defective.
\end{proof}
\end{document}